\def\blfootnote{\gdef\@thefnmark{}\@footnotetext}
\title{Variable aggregation for nonlinear optimization problems}
\author{Sakshi Naik \and Lorenz Biegler \and Russell Bent \and Robert Parker}
\institute{
  SN and LB \at
  Department of Chemical Engineering, Carnegie Mellon University, Pittsburgh, PA, USA
  \and
  RB and RP \at
  Applied Math and Plasma Physics Group, Los Alamos National Laboratory, Los Alamos, NM, USA
}
\date{Received: date / Accepted: date}
\newcommand{\rev}[1]{{#1}}
\begin{document}
\maketitle

\begin{abstract}
Variable aggregation has been largely studied as an important pre-solve algorithm for
optimization of linear and mixed-integer programs. Although some nonlinear solvers
and algebraic modeling languages implement variable aggregation as a pre-solve reduction, the
impact it can have on constrained nonlinear programs is unexplored. In this work, we
formalize variable aggregation as a pre-solve algorithm to develop reduced-space formulations
of nonlinear programs. 
A novel approximate maximum variable aggregation strategy is developed to aggregate
as many variables as possible. Furthermore, aggregation strategies that preserve the
problem structure are compared against approximate maximum aggregation. Our results
show that variable aggregation can generally help to improve the convergence reliability
of nonlinear programs. It can also help in reducing total solve time. However, Hessian
evaluation can become a bottleneck if aggregation significantly increases the number
of variables appearing nonlinearly in many constraints.
\keywords{Nonlinear optimization \and Presolve \and Graph theory}
\end{abstract}

\section{Introduction}
Pre-solve reductions have been important to develop linear and
mixed-integer optimization solvers into generic tools that can solve
real-world problems in practical settings \cite{bixby2007}.
One such pre-solve reduction is to substitute an expression for a variable
that is defined by an equality constraint.
For example, if an optimization problem contains the constraints
\[
  \begin{array}{c}
    x^2 + y^2 \leq 1 \\
    y = 2z + 1 \\
  \end{array}
\]
then the expression $2z+1$ may be substituted for $y$ in the first constraint
to reduce the number of variables and constraints in the optimization problem.
We refer to this substitution as variable {\it aggregation} and the resulting
optimization problem as a {\it reduced-space} formulation.
In a summary of Gurobi's \cite{gurobimanual} pre-solve methods for mixed-integer
linear programming, Achterberg et al. \cite{achterberg2020} show that variable
aggregation can be a source of significant speed-up, but also mention that it
can be a source of numerical errors if not implemented carefully.
In the context of nonlinear optimization, Bongartz \cite{bongartzthesis}
tests manually-chosen variable aggregations and remarks that ``the reduced-space
formulations reduced computational times and increased success rates of local solvers''
with the caveat that ``the smallest possible formulation did not necessarily result in
the best computational performance'' (see Sections 3.4.3.3 and 3.5 of \cite{bongartzthesis}).

For nonlinear local optimization, a generalization of variable aggregation
is to remove a subset of variables and equality constraints that have a nonsingular
Jacobian using the implicit function theorem. Parker et al. \cite{parker2022}
and Bugosen et al. \cite{bugosen2024gibbs} show
that implicit function reformulations can improve reliability of interior point methods
for nonlinear optimization, although these implicit function reformulations must
be chosen by an expert user.
While it is not easy to automatically choose general subsets of equations and variables
that always have nonsingular Jacobian matrices, it is relatively straightforward to
identify variables and equations that can be eliminated by explicit variable aggregation.
Here, we propose {\it automatic} variable aggregation in nonlinear optimization models as a
method to improve convergence reliability of an interior point solver.

Several existing software packages implement automatic
variable aggregation for nonlinear optimization models.
The AMPL algebraic modeling language \cite{ampl} scans constraints to identify
those written in the form ``$y = f(x,\dots)$''.
If $y$ is unbounded and has not already been aggregated,
the expression $f(x,\dots)$ will be substituted for $y$ everywhere it appears.
(This behavior is controlled by the \texttt{substout} option.
See \cite{fourer1994} and Chapters 14 and 18.2 of \cite{ampl} for details.)
The AIMMS modeling language \cite{aimmsmanual} and Pyomo modeling environment
\cite{pyomobook,pyomopaper} also implement presolve options for aggregating
variables, but only use equality constraints with at most two variables for
these aggregations.
Variable aggregation may also be performed within nonlinear optimization solvers.
The Knitro \cite{knitro} and CONOPT \cite{conopt} solvers implement options
for variable aggregation \cite{knitromanual,conoptgamsmanual}, but it is not clear
what criteria these commercial solvers use to select variables and constraints
to eliminate.

Despite the many variable aggregation implementations present in open-source and
commercial software for nonlinear optimization, the impacts of these pre-solve
strategies are not well-documented.
We fill this gap by analyzing several aggregation methods inspired by those implemented
in AMPL, AIMMS, and Pyomo. We analyze structural properties, solve time, and
convergence reliability of optimization models before and after aggregation.
Specifically, our contributions are:
\begin{enumerate}
  \item We formalize variable aggregation for nonlinear optimization problems
    and present an easily checkable sufficient condition for a subset of
    variables and equality constraints to admit a valid explicit aggregation.
  \item We present a new approximate-maximum algorithm for variable aggregation.
  \item We demonstrate that conservative variable aggregation approaches
    lead to modest improvements in solve time, while aggressive variable aggregation
    methods increase solve time by creating expensive-to-evaluate Hessian matrices.
  \item We demonstrate the convergence reliability benefits of variable aggregation
    on several parameterized nonlinear optimization test problems by sampling
    121 parameter values per problem and attempting to solve each instance.
    For each aggregation method, we attempt to solve each problem instance.
    Six different aggregation methods (and the original problem)
    are compared by the number of instances solved for each test problem.
\end{enumerate}
Improved convergence reliability of nonlinear optimization problems when aggregating
variables is significant as even state-of-the-art nonlinear optimization solvers
are not perfectly reliable. That is, convergence (within a pre-specified time or iteration limit)
can be sensitive to initialization, scaling factors, and parameter values.
Despite mature convergence theory for globalized optimization algorithms
(e.g., \cite{waechter2005global}) the reasons
for algorithms' sensitivities to these values is not well-understood.
By documenting the effect of variable aggregation on convergence reliability, we
build an empirical foundation on which this phenomenon can begin to be understood.

The goal of this work is not to advocate for any particular aggregation method or
implementation but to present a variety of approaches and analyze structure,
runtime, and reliability that result when they are applied to nonlinear optimization
problems.

\section{Mathematical background}

\subsection{Nonlinear optimization}
\label{sec:nlopt}
We address nonlinear optimization problems in the form given by
\rev{Problem} (\ref{eqn:nlopt}), where $z$ is a variable vector in $\mathbb{R}^n$
and $f$, $g$, and $h$ are functions with outputs in
$\mathbb{R}$, $\mathbb{R}^{m_g}$, and $\mathbb{R}^{m_h}$.
\begin{equation}
  \begin{array}{cl}
    \displaystyle\min_z & f(z) \\
    \text{s.t.} & g(z) = 0 \\
    & h(z) \leq 0 \\
    & z^l \leq z \leq z^u \\
  \end{array}
  \label{eqn:nlopt}
\end{equation}
Constant vectors $z^l$ and $z^u$ may have coordinates that are
$-\infty$ or $+\infty$, respectively, if $z$ is unbounded in some coordinates.
Lagrange multipliers for equality, inequality, lower bound, and upper bound constraints
are denoted $\lambda$, $\nu$, $\gamma^l$, and $\gamma^u$.
For instance, interior point methods are efficient algorithms for arriving at local solutions
of problems in this form \cite{biegler2010}.
They operate by iteratively solving the linear system given in Equation \ref{eqn:kkt}
to compute search directions $(dz, ds, d\lambda, d\nu)$.
\rev{
\begin{equation}
  \resizebox{\textwidth}{!}{
    $
  \left[\begin{array}{cccc}
    W + \Sigma & & \nabla g & \nabla h \\
    & S^{-1} \Gamma^s & & I \\
    \nabla g^T & & & \\
    \nabla h^T & I & & \\
  \end{array}\right]
  \left(\begin{array}{c}
    dz \\
    ds \\
    d\lambda \\
    d\nu \\
  \end{array}\right)
  = -\left(\begin{array}{c}
    \nabla f + \nabla g^T\lambda + \nabla h^T\nu 
    + \left(Z_u^{-1} - Z_l^{-1}\right)\mu \mathbbm{1} \\
    \nu - S^{-1} \mu\mathbbm{1} \\ 
    g \\
    h + s\\
  \end{array}\right)
  $
}
  \label{eqn:kkt}
\end{equation}
}
Here, $W$ is the Hessian of the Lagrangian with respect to $z$ and
$\Sigma = Z_u^{-1}\Gamma^u + Z_l^{-1}\Gamma^l$.
Diagonal matrix $S$ has a diagonal of slack variables $s$, which are
introduced along with the bound constraint $s\geq 0$ and multiplier $\gamma^s$
to reformulate inequality constraints. $Z_u$ and $Z_l$ are
diagonal matrices of $(z^u - z)$ and $(z - z^l)$, $\Gamma^s$,
$\Gamma^u$, and $\Gamma^l$ are diagonal matrices of the corresponding
Lagrange multipliers, and $\mu$ \rev{is} a barrier parameter chosen by the algorithm.
The matrix on the left-hand-side of this equation is referred to as the
Karush-Kuhn-Tucker, or KKT, matrix.
Our convention for Jacobian matrices, e.g., $\nabla g$,
is that rows correspond to variables while columns correspond to constraints.
We note that the transpose of the Jacobian, e.g., $\nabla g^T$, has the same
sparsity pattern as the incidence matrices we will describe in Section \ref{sec:bipartite}.

Constructing the linear system in Equation \ref{eqn:kkt} requires the
Hessian of the Lagrangian, the Jacobian of constraints, and the gradient
of the objective function.
If algebraic expression graphs for constraint and objective functions are available,
these derivatives are computed by automatic differentiation (AD) of these functions
\cite{griewank2008}.
For most problems, the computational bottleneck of solving \rev{Problem} (\ref{eqn:nlopt})
with interior point methods is factorization of the KKT matrix.
However, for some problems with complicated algebraic expressions, evaluation of
the Hessian of the Lagrangian, $W$, is the bottleneck.

\subsection{Algebraic variables and constraints}
Section \ref{sec:nlopt} considers vector-valued functions of a variable vector $z$.
In algebraic modeling language interfaces to nonlinear optimization solvers, however,
variables are represented as scalar quantities that participate in algebraic expressions.
These algebraic expressions define each coordinate of the functions in \rev{Problem}
(\ref{eqn:nlopt}).
An algebraic expression can be represented programmatically as a tree 
where leaf nodes are (scalar) variables or constants and all other
nodes are algebraic operators such as addition ($+$), subtraction ($-$), or multiplication ($*$).
For example, the algebraic expression tree for the expression $3x^2-1$ is given in Figure (\ref{fig:expression-tree}).

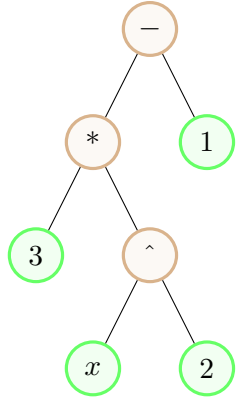
\begin{figure}
    \begin{center}
        \begin{tikzpicture}[
            leaf/.style={circle, draw=green!60, fill=green!5, very thick, minimum size=7mm},
            op/.style={circle, draw=brown!60, fill=brown!5, very thick, minimum size=7mm},
          ]
          \node[op] (name) {\large $-$}
          child {node[op] {\large *}
            child {node[leaf] {\large $3$}}
            child {node[op] {\large $\hat{}$}
              child {node[leaf] {\large $x$}}
              child {node[leaf] {\large $2$}}
              }
            }
            child {node[leaf] {\large $1$}}
          ;
        \end{tikzpicture}
    \end{center}
    \caption{Algebraic expression tree for the expression $3x^2 -1$}
    \label{fig:expression-tree}
\end{figure}
Given an algebraic expression graph defining a function, useful information about
the function can be computed efficiently by recursively visiting the nodes in the
graph (Gay provides a good overview in \cite{gay2012}).
If common subexpressions are reused among multiple functions (or even within the same
function), expressions may be represented as a directed acyclic graph (DAG), where
subgraphs corresponding to common subexpressions are not duplicated.

For the aggregation methods described in Section \ref{sec:methods}, we require
subroutines that process expression graphs and return (a) the set of variables
that participate and (b) the set of variables that participate linearly.
We denote these subroutines {\tt all\_vars} and {\tt linear\_vars}, each
of which accept a scalar constraint and return the set of scalar variables
that participate (linearly). These subroutines run in $\mathcal{O}(l)$ time,
where $l$ is the number of nodes in the expression graph.
\begin{table}
  \centering
  \caption{Subroutines used by aggregation methods}
  \resizebox{\textwidth}{!}{
    \begin{tabular}{cp{0.2\linewidth}p{0.2\linewidth}cp{0.3\linewidth}}
      \toprule
    Subroutine & Inputs & Outputs & Time complexity & Description \\
    \midrule
    {\tt size} & Set & Integer & $\mathcal{O}(1)$ & Return the number of elements in the set \\
    \midrule
    {\tt all\_vars} & Scalar constraint & Set of variables & $\mathcal{O}(l)$ & Return the set of variables that participate in a constraint\\
    {\tt linear\_vars} & Scalar constraint & Set of variables & $\mathcal{O}(l)$ & Return the set of variables that participate linearly in a constraint\\
    \midrule
    {\tt bipartite\_graph} & Set of variables, set of constraints & Bipartite graph & $\mathcal{O}(l(n_v+n_e))$ & Return the bipartite graph of variable-constraint incidence \\
    {\tt linear\_bipartite\_graph} & Set of variables, set of constraints & Bipartite graph & $\mathcal{O}(l(n_v+n_e))$ & Return the bipartite graph of linear variable-constraint incidence \\
    {\tt induced\_subgraph} & Graph, set of edges & Graph & $\mathcal{O}(n_e)$ & Return the subgraph induced by a set of edges \\
    {\tt maximum\_matching} & Bipartite graph & Set of edges & $\mathcal{O}( (n_v + n_e) \sqrt{n_v})$ & Return a maximum-cardinality matching \\
    {\tt block\_triangularize} & Bipartite graph, perfect matching & Ordered set of sets of edges & $\mathcal{O}(n_v + n_e)$ & Return the partition of the matching that defines the irreducible block-triangular form \\
    \bottomrule
  \end{tabular}
  }
  \label{tab:subroutines}
\end{table}

\subsection{Bipartite graph of variables and constraints}
\label{sec:bipartite}
While expression graphs precisely represent the functional form of constraints,
bipartite graphs of variables and constraints let us compute useful properties
of sets of constraints (and the variables they contain).

We define a {\it graph}, $G=(V,E)$, as a set of vertices (nodes), $V$, and a set of edges,
$E$. An edge $e=(u,v)$ is a pair of nodes.
A {\it bipartite graph}, $G=(A,B,E)$, contains two disjoint sets
of nodes $A$ and $B$ where, for every edge $(a,b) \in E$, $a$ is in $A$
and $b$ is in $B$.
A node $v$ and edge $e$ are {\it incident} if $e$ contains $v$.
The {\it degree} of a node is its number of incident edges.
A {\it subgraph} induced by a subset of nodes $V_s\subset V$ is the graph
$(V_s, E_s)$, where $E_s$ is the set of edges with both nodes in $V_s$.
A subgraph induced by a subset of edges $E_s\subset E$
is the subset induced by the set of nodes $V_s$, where 
$V_s$ is the set of incident nodes of edges in $E_s$.

A bipartite graph may be represented as an {\it incidence matrix}, where
rows correspond to one set of nodes and columns correspond to the other.
An entry exists for the sparse matrix row-column pair if an edge exists between
the corresponding nodes in the bipartite graph.
\begin{figure}[h]
  \centering
  \includegraphics[width=12cm]{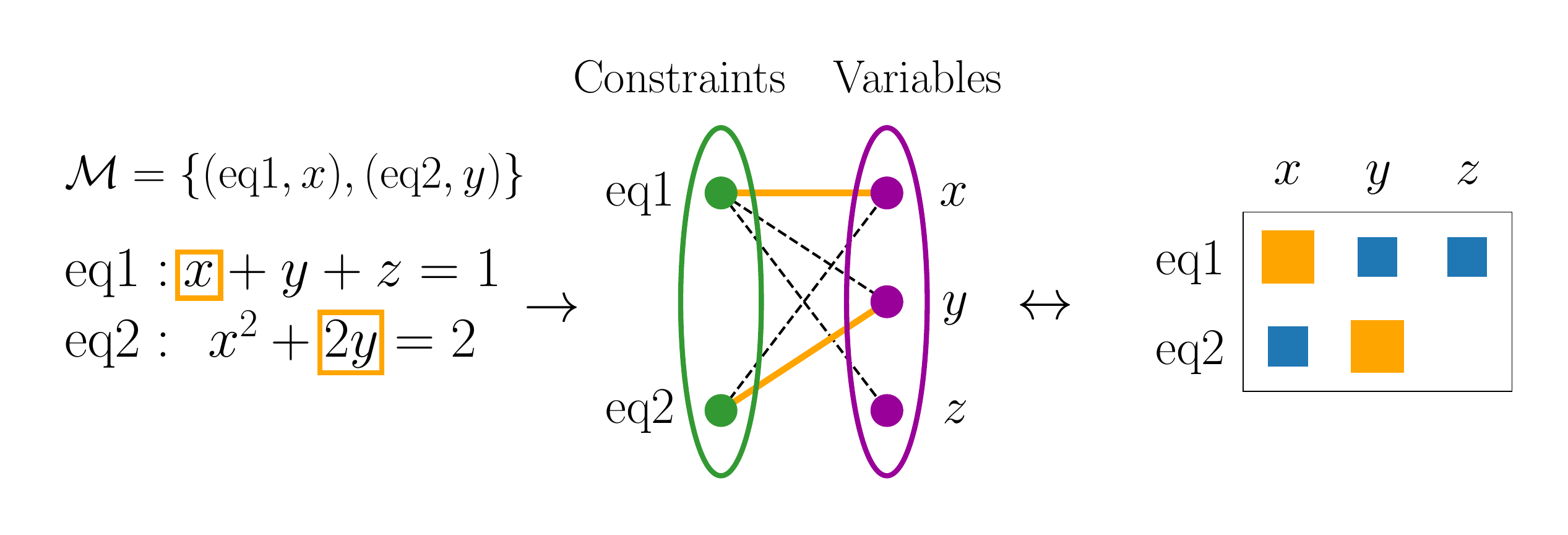}
  \caption[]{Bipartite graph and incidence matrix corresponding to variables and equations
  with a maximum matching highlighted.\footnotemark[1]}
  \label{fig:igraph-matching}
\end{figure}
\footnotetext[1]{Figure reproduced from \cite{parker2023dulmage}.}

A {\it matching}, $\mathcal{M}$, is a set of edges, no two of which share a node.
A {\it maximum matching} is a matching of the largest cardinality possible for
a given graph, and a {\it perfect matching} is a matching that covers every
node in the graph. A maximum matching in a bipartite graph may be computed in
$\mathcal{O}( (n_v + n_e) \sqrt{n_v})$ time, where $n_v$ is the number of nodes and
$n_e$ is the number of edges, by the algorithm of Hopcroft and Karp
\cite{hopcroft1973}.
Not all bipartite graphs have a perfect matching, but a subgraph induced by
a matching $\mathcal{M}$ always has a perfect matching ($\mathcal{M}$ itself).
A maximum matching is not unique, even if it is perfect; that is, many different
matchings may have the same (maximum) cardinality.
\begin{figure}[h]
  \centering
  \includegraphics[width=9cm]{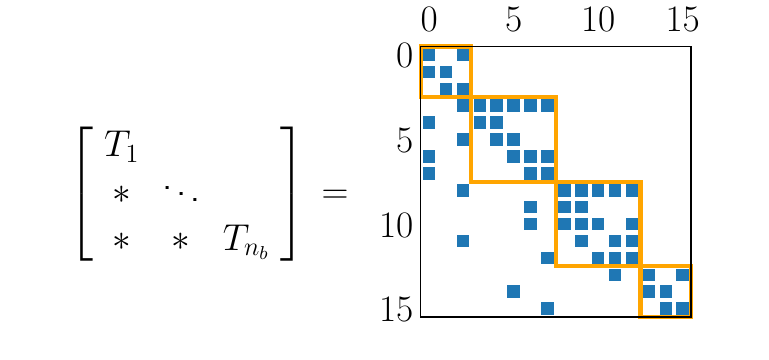}
  \caption[]{Block triangular form of an incidence matrix.
    Here, $T_1,\dots,T_{n_b}$ are the submatrices corresponding
    to subgraphs induced by subsets in the block triangular partition.\footnotemark[2]
  }
  \label{fig:bt}
\end{figure}

The incidence matrix of a bipartite graph that admits a perfect matching can be
permuted into irreducible block lower triangular form by an algorithm given by
Duff and Reid \cite{duff1978implementation,duff1978algorithm}.
This is done by a subroutine, called {\tt block\_triangularize} in Table
\ref{tab:subroutines}, that accepts a bipartite graph and a perfect matching thereof
and returns an ordered partition $\mathcal{T}$ of the perfect matching. Permuting the
rows and columns of the incidence matrix into the order defined by the partition will
put it into irreducible block lower triangular form. Each subset of edges
$T\in \mathcal{T}$ corresponds to a diagonal block of the incidence matrix.
The diagonal blocks are irreducible in the sense that submatrices defined by each
$T\in \mathcal{T}$ cannot be further partitioned into block triangular form.
That is, if $\mathcal{T}_T$ is the block triangular partition of block $T$,
${\tt size}(\mathcal{T}_T)=1$. This property of irreducibility is known as the
{\it Strong Hall Property} \cite{scott2023,coleman1986}.
An illustration a block triangular partition is given in Figure \ref{fig:bt},
and details about the block triangularization algorithm are given in Appendix
\ref{sec:block-triang}.
\footnotetext[2]{Figure adapted from \cite{parker2023dulmage}.}

Given a set of scalar variables, $X$, and a set of scalar constraints, $C$,
a bipartite graph may be constructed where one set of nodes is the set
of variables and the other is the set of constraints. An edge exists
between a variable node $x$ and a constraint node $c$ if $x$ participates
in $c$.
Figure \ref{fig:igraph-matching} illustrates a bipartite graph and incidence matrix
corresponding to three-variable, two-constraint system.
Alternatively, the edges may be restricted to only the pairs
corresponding to linear variable-constraint incidence.
An edge corresponding to linear variable-constraint incidence is referred to as a
{\it linear edge}, while any other edge is a {\it nonlinear edge}.
Subroutines that return or accept bipartite graphs of variables and constraints
are used by the aggregation methods in Section \ref{sec:methods},
and are listed in Table \ref{tab:subroutines}, where $n_v$ and $n_e$ are the numbers
of vertices and edges in a graph.

If the bipartite graph of a set of variables and constraints admits a perfect matching,
it can be used to determine whether the variables and constraints can be permuted
to have a lower triangular Jacobian. Such a permutation exists if and only if the
irreducible block triangular partition $\mathcal{T}$ contains no subsets with
cardinalities greater than one. This is due to the irreducibility of the subsets
of the block triangular partition.

\subsection{Variable aggregation}
\label{sec:aggregation}
Given an optimization problem in the form of \rev{Problem} (\ref{eqn:nlopt}), it may be
possible to partition the equality constraint functions $g=(\bar g, \tilde g)$
and variables $z=(u, v)$ such that the Jacobian $\nabla_v \tilde g$ is always
nonsingular (over a domain of interest).
In this case, by the implicit function theorem, there exists a function
$v = \tilde g_v(u)$ that satisfies $\tilde g(u, \tilde g_v(u))=0$.
The function $\tilde g_v$ is exploited in reduced space optimization methods
such as \cite{parker2022,pacaud2022,pacaud2024} to eliminate
variables and equality constraints (here, $v$ and $\tilde g(u, v) = 0$) from the
optimization problem.
The resulting reduced-space optimization problem is given by \rev{Problem} \ref{eqn:reduced-nlopt}.

\begin{equation}
  \begin{array}{cl}
    \displaystyle\min_u & f(u, \tilde g_v(u)) \\
    \text{s.t.} & \bar g(u, g_v(u)) = 0 \\
    & h(u, g_v(u)) \leq 0 \\
    & u^l \leq u \leq u^u \\
    & v^l \leq g_v(u) \leq v^u \\
  \end{array}
  \label{eqn:reduced-nlopt}
\end{equation}

A subset of variables $v$ and constraints $\tilde g(u,v)=0$ to eliminate
is represented programmatically as a matching, or a set of variable-constraint
edges in the bipartite incidence graph. We refer to a matching that is used
for variable aggregation as an {\it elimination order} or an
{\it aggregation set}. While we refer to Problem \eqref{eqn:reduced-nlopt}
as a {\it reduced-space} optimization problem, it may have more constraints
(where ``constraint'' is defined to exclude variable bounds)
than the original problem. This is because bounds on the eliminated variables,
$v$, in the original problem are converted to inequality constraints in the
reduced-space problem. While aggregating a variable eliminates an
{\it equality} constraint, it may introduce up to two additional
{\it inequality} constraints if the variable has upper and lower bounds.

%
Identifying whether a general subset of variables and constraints $v$ and $\tilde g(u,v)=0$
is nonsingular over a domain of interest involves solving a global optimization
problem, which is beyond the scope of our intended presolve application.
However, it is sufficient (but not necessary) for the constraints to have the form
given by Lemma \ref{lma:semilinear}, \rev{which uses the following definition:}
\rev{\begin{definition}
  A matrix $A\in\mathbb{R}^{n\times n}$ is \textit{strictly lower triangular} if
  $A_{ij}=0$ for all $i\leq j$.
\end{definition}}
\noindent\rev{That is, a strictly lower triangular matrix is lower triangular with an empty diagonal.}
\begin{lemma}
  Let
  \[\tilde g(u, v) = v - \tilde g^{\rm def}(u, v) = 0\]
  be a subset of constraints in \rev{Problem} \ref{eqn:reduced-nlopt},
  where $\nabla_v \tilde g^{\mathrm{def},T}$ is strictly lower triangular.
  Then $\nabla_v \tilde g^T$ is nonsingular everywhere and the subsets of variables
  and constraints $v$ and $\tilde g$ may be eliminated to form a reduced-space
  optimization problem.
  \label{lma:semilinear}
\end{lemma}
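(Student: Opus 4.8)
\medskip
\noindent\textbf{Proof plan.}
The plan is to split the statement into two parts: (i) $\nabla_v \tilde g^T$ is nonsingular at every point, and (ii) the subsets $v$ and $\tilde g$ admit an explicit aggregation map $\tilde g_v$ such that eliminating them reproduces Problem \eqref{eqn:reduced-nlopt} with the same feasible set (after projecting out $v$) and the same objective value, so the elimination is valid.

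For (i), I would differentiate the defining relation $\tilde g(u,v) = v - \tilde g^{\mathrm{def}}(u,v)$ with respect to $v$. Using the paper's convention that $\nabla(\cdot)^T$ has rows indexed by constraints and columns by variables, this gives $\nabla_v \tilde g^T = I - \nabla_v \tilde g^{\mathrm{def},T}$. Since $\nabla_v \tilde g^{\mathrm{def},T}$ is strictly lower triangular by hypothesis, $\nabla_v \tilde g^T$ is lower triangular with every diagonal entry equal to $1$, so $\det \nabla_v \tilde g^T = 1 \neq 0$ identically; hence the matrix is nonsingular everywhere. This already licenses a local inverse via the implicit function theorem, but the triangular structure yields the stronger global statement used in part (ii).

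For (ii), I would fix the ordering of the components of $v$ and of $\tilde g$ for which $\nabla_v \tilde g^{\mathrm{def},T}$ is strictly lower triangular (such an ordering is exactly what the hypothesis provides, and in the presolve application it is the one furnished by the aggregation set). Strict lower triangularity means $\partial \tilde g^{\mathrm{def}}_i/\partial v_j \equiv 0$ for all $j \ge i$, so on the connected domain of interest $\tilde g^{\mathrm{def}}_i$ is a function of $u$ and $v_1,\dots,v_{i-1}$ only. I then build $\tilde g_v$ by forward substitution: set $\tilde g_{v,1}(u) = \tilde g^{\mathrm{def}}_1(u)$ and, inductively, $\tilde g_{v,i}(u) = \tilde g^{\mathrm{def}}_i\!\big(u,\,\tilde g_{v,1}(u),\dots,\tilde g_{v,i-1}(u)\big)$. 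By construction $\tilde g(u,\tilde g_v(u)) = 0$, and running the same substitution on an arbitrary solution of $\tilde g(u,v)=0$ shows $v = \tilde g_v(u)$ is the unique solution for each $u$. Hence the constraints $\tilde g = 0$ cut out precisely the graph of $\tilde g_v$; substituting this graph into the remaining constraints $\bar g, h$ and the objective and converting the bounds $v^l \le v \le v^u$ into the inequality constraints $v^l \le \tilde g_v(u) \le v^u$ gives exactly Problem \eqref{eqn:reduced-nlopt}, with the claimed equivalence of feasible points and objective values.

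The only delicate point — more a matter of care than a genuine obstacle — is the passage from the symbolic statement ``$\nabla_v \tilde g^{\mathrm{def},T}$ is strictly lower triangular'' to the functional statement ``$\tilde g^{\mathrm{def}}_i$ depends only on $u,v_1,\dots,v_{i-1}$,'' which relies on the standard fact that a $C^1$ function with an identically vanishing partial derivative on a connected set is independent of that coordinate. Strictness of the triangularity (the vanishing diagonal) is essential: it is what keeps $v_i$ off the right-hand side of its own defining equation and makes the forward substitution well posed. The remaining steps are routine bookkeeping establishing the equivalence of the original and reduced optimization problems.
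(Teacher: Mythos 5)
Your proof is correct and follows essentially the same route as the paper: the key observation in both is that $\nabla_v \tilde g^T = I - \nabla_v \tilde g^{\mathrm{def},T}$ is lower triangular with unit diagonal, hence nonsingular everywhere. The additional forward-substitution construction of $\tilde g_v$ that you spell out is treated only informally in the paper (in the discussion following the lemma, where it notes that strict lower triangularity means $v_i$ never appears in its own defining equation), so your write-up simply makes explicit what the paper leaves as prose.
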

\begin{proof}
  $\nabla_v \tilde g^T$ is lower triangular with a diagonal of all ones,
  and therefore is nonsingular.
\end{proof}

A subset of constraints can easily be checked for this form by inspecting algebraic
expression graphs and the bipartite incidence graph.
Here, $v$ is the vector of {\it defined variables} and $g^{\rm def}$ is
the {\it defining function} (composed of {\it defining expressions}).

Constraints with this form have the additional advantage that a reduced-space
optimization problem may be constructed explicitly by recursively substituting
each eliminated variable $v_i$ with the corresponding coordinate of the defining
function $\tilde g^{\rm def}_i$.
Because of strict lower triangularity of $\rev{\nabla}_\rev{v} \tilde g^{{\rm def}\rev{,T}}$, the eliminated
variable $v_i$ does not appear nonlinearly in the equation used to eliminate it,
so it is never necessary to solve for $v_i$ by an iterative (or implicit) equation
solving method.

While one can easily check whether constraints have the form
of Lemma \ref{lma:semilinear},
in a general optimization problem there are a combinatorial number
of subsets of variables and constraints that could have this form.
There are several approaches one can take to decide which subset to choose
for aggregation. One approach, used by AMPL with the {\tt substout=1} option,
is to try to eliminate as many constraints as possible without breaking
strict lower triangularity of $\nabla_v \tilde g^{\mathrm{def},T}$.
A more conservative approach, taken by AIMMS and Pyomo, is to only eliminate
linear equality constraints with at most two variables.
We refer to this as a {\it linear-degree-2} elimination strategy.

Other approaches could be considered. For instance, one can set an upper bound
on the number of variables in an eliminated constraint that is greater than two.
Nonlinear constraints with only certain functional forms, such as low-order
polynomials, could be considered. Additionally, the ``target constraints''
into which $\tilde g^{\rm def}_i$ is substituted could be inspected to avoid
aggregations that convert linear constraints into nonlinear constraints
(as suggested by Amarger et al. \cite{amarger1992}).
In this work, we consider aggregation strategies involving constraints with
at most two variables as well as aggregation strategies which attempt to eliminate
the maximum number of variables. The specific strategies we implement
are covered in Section \ref{sec:methods}.

\section{Aggregation methods}
\label{sec:methods}

While aggregating the maximum number of variables may be valuable for some
problems, making significant modifications to problem structure may
compromise solve time or convergence reliability for others.
For this reason, we also propose aggregation strategies that preserve certain
properties of the Jacobian of remaining constraints $\bar g$ and $h$.
This section details the variable aggregation methods that we compare in
Section \ref{sec:results}. Each elimination method is given as an algorithm
that accepts sets of all variables $X$ and all constraints $C$ and returns
a set of variables and constraints to aggregate as a matching $\mathcal{M}$
of the corresponding nodes in the bipartite incidence graph.

\subsection{Approximate maximum elimination}
A natural approach is to aggregate as many variables and constraints
that have the form of Lemma \ref{lma:semilinear} as possible.
This is similar to the approach suggested by Christensen \cite{christensen1970}.
However, identifying the maximum aggregation set (according to the form given
by \ref{lma:semilinear}) is NP-complete.
\begin{theorem}
  Let $X$ be a set of scalar variables, $C$ a set of scalar algebraic
  constraints, and $G$ the corresponding bipartite incidence graph.
  Let $\mathcal{M}$ be a matching of linear edges of $G$ where the incidence matrix
  of the subgraph induced by $\mathcal{M}$, $G_\mathcal{M}$, is lower triangular.
  We refer to the problem of identifying $\mathcal{M}$ with maximum cardinality
  as the \textit{maximum aggregation set problem}.
  The maximum aggregation set problem is NP-complete.
  \label{thm:np-complete}
\end{theorem}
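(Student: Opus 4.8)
The plan is to prove both membership in NP and NP-hardness, treating the problem in its decision form: given $X$, $C$, and an integer $k$, decide whether there is a matching $\mathcal{M}$ of linear edges of $G$ with $|\mathcal{M}| \ge k$ such that the incidence matrix of $G_\mathcal{M}$ is permutable to lower triangular form. By the characterization recalled at the end of Section \ref{sec:bipartite}, since $G_\mathcal{M}$ has the perfect matching $\mathcal{M}$, this last condition holds if and only if the irreducible block triangular partition of $G_\mathcal{M}$ consists entirely of singleton blocks, equivalently the directed graph obtained from $G_\mathcal{M}$ by contracting each matched variable--constraint pair (orienting each remaining edge from its constraint node to its variable node) is acyclic.

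Membership in NP is the easy direction: a candidate $\mathcal{M}$ is a certificate of size $\mathcal{O}(n_v + n_e)$, and one checks in polynomial time that it is a matching, that each of its edges is linear (using {\tt linear\_vars}), that $|\mathcal{M}| \ge k$, and -- forming $G_\mathcal{M}$ with {\tt induced\_subgraph} and running {\tt block\_triangularize} on $G_\mathcal{M}$ with the perfect matching $\mathcal{M}$ -- that every block of the returned partition is a singleton.

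For NP-hardness I would reduce from \textsc{Independent Set}. Given a graph $H=(V_H,E_H)$ and integer $k$, create one variable $v_w$ and one constraint $c_w$ for each $w \in V_H$, where $c_w$ is the algebraic constraint $v_w + \sum_{w' : \{w,w'\}\in E_H} v_{w'}^2 = 0$. In the resulting incidence graph $G$ the linear edges are exactly the ``diagonal'' edges $(v_w, c_w)$, while each $\{w,w'\} \in E_H$ contributes the nonlinear edges $(v_{w'}, c_w)$ and $(v_w, c_{w'})$. Consequently a matching of linear edges is precisely a set $\mathcal{M}_S = \{(v_w,c_w) : w \in S\}$ for some $S \subseteq V_H$, with $|\mathcal{M}_S| = |S|$. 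I would then show that $G_{\mathcal{M}_S}$ is permutable to lower triangular form if and only if $S$ is an independent set of $H$: if $S$ is independent, the subgraph induced by $\mathcal{M}_S$ contains only the diagonal edges, so its incidence matrix is diagonal and trivially lower triangular; if $S$ contains an edge $\{w,w'\}$, then $G_{\mathcal{M}_S}$ contains $(v_w,c_w)$, $(v_{w'},c_{w'})$, $(v_{w'},c_w)$, and $(v_w,c_{w'})$, whose contracted digraph has the $2$-cycle $w \to w' \to w$, forcing a block of size at least two. Hence $H$ has an independent set of size $\ge k$ if and only if the constructed instance has a valid aggregation set of size $\ge k$; the construction is clearly polynomial, completing the reduction.

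The main obstacle -- the part needing the most care -- is the correctness equivalence: confirming that the constructed graph has no obstruction to triangularity other than the intended $2$-cycles. Concretely, one must verify both that restricting $\mathcal{M}$ to linear edges really does force it onto the diagonal edges (so that matchings are in bijection with vertex subsets of $H$), and that every directed cycle of the contracted digraph of $G_{\mathcal{M}_S}$ -- of arbitrary length -- necessarily uses an $H$-edge with both endpoints in $S$, so that acyclicity of that digraph is exactly independence of $S$. An essentially identical argument works starting from \textsc{Clique} or \textsc{Vertex Cover}, and a more self-contained (but longer) reduction could be given from \textsc{3-SAT} using variable and clause gadgets.
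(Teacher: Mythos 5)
Your proof is correct, but it takes a genuinely different route from the paper. The paper's argument is citation-based: it observes that the restriction of the maximum aggregation set problem to a square system of purely linear constraints admitting a perfect matching is exactly the tearing problem (finding a largest lower-triangular square submatrix), invokes Carpanzano's NP-completeness result for tearing (itself tied to minimum feedback arc set on Karp's list), and concludes NP-hardness of the general problem because it contains this special case; membership in NP is asserted rather than argued. You instead give a self-contained reduction from \textsc{Independent Set}, using nonlinearity as a gadget: the constraint $v_w + \sum_{w'\colon \{w,w'\}\in E_H} v_{w'}^2 = 0$ makes the diagonal edges the only linear edges, so linear matchings are in bijection with vertex subsets $S$, and triangularizability of the induced subgraph is exactly independence of $S$. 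You also verify NP membership explicitly via the block-triangularization check from Section \ref{sec:bipartite}. What you flag as the main remaining obstacle (cycles of arbitrary length) is in fact immediate: every arc of the contracted digraph of $G_{\mathcal{M}_S}$ corresponds to an $H$-edge with both endpoints in $S$, and any such edge already produces a $2$-cycle, so the digraph is acyclic iff it has no arcs iff $S$ is independent. The trade-off: the paper's route is shorter and situates the problem within the tearing/feedback-arc-set literature (and shows hardness even for the purely linear, square, perfectly matched case), while yours is self-contained, makes the hardness construction explicit, and covers the NP-membership half that the paper leaves implicit; note, though, that your instances essentially rely on nonlinear terms to constrain the matching, so on their own they do not recover the paper's stronger observation that the all-linear special case is already hard.
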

\begin{proof}
  The proof is by reduction to the minimum tearing problem.
  Let $X$ and $C$ be sets of variables and {\it linear} constraints with the same
  cardinalities that admit a perfect matching.
  It is sufficient to show that identifying a maximum-cardinality matching
  of $X$ and $C$ with a lower triangular induced submatrix is NP-complete.
  Identifying this maximum-cardinality matching is equivalent to \rev{identifying
  the largest} square submatrix of $X$ and $C$'s incidence matrix that is lower
  triangular, which is the tearing problem as described by Carpanzano
  \cite{carpanzano2000order}. Carpanzano proves that this problem is NP-complete
  by reduction to the minimum feedback arc set problem (Problem 8 on Karp's list
  of NP-complete problems \cite{karp1972reducibility}).
  Because this problem is a special case of our maximum aggregation set problem,
  the maximum aggregation set problem is NP-complete as well.
\end{proof}
Because identifying a maximum aggregation set is NP-complete, we do not attempt
to solve it exactly, but instead develop heuristics to identify large aggregation
sets.

\subsubsection{Greedy heuristic \rev{(GR)}}
The first heuristic we implement constructs an elimination order by iterating over
constraints and checking whether there exists a variable that appears (a) linearly
and (b) does not participate in any constraint already in the elimination order.
If these two conditions are met, the variable-constraint pair is added to the
elimination order. This approach is given by Algorithm \ref{alg:greedy} and
is referred to as the {\it greedy} aggregation method, or GR for short.
By construction, Algorithm \ref{alg:greedy} returns a set of variable-constraint
pairs in the form required by Lemma \ref{lma:semilinear}.
This method in inspired by the description of the aggregation phase in AMPL's
presolver given in Chapter 18.2 of \cite{ampl}.
\begin{algorithm}
  \caption{: {\tt greedy\_aggregation\_set}}
  \begin{algorithmic}[1]
    \State {\bf Inputs:} Sets of variables $X$ and constraints $C$
    \State $\mathcal{M} = \{\}$
    \State $S = \{\}$
    \For{$c\text{~\bf in } C$}
      \For{$x \text{~\bf in }\tt linear\_vars(c)$}
        \If{$x\notin S$}
          \State $\mathcal{M} \gets \mathcal{M}\cup \{(x, c)\}$
          \State $S \gets S \cup \text{\tt all\_vars}(c)$
          \State {\bf break}
        \EndIf
      \EndFor
    \EndFor
    \State {\bf Return:} $\mathcal{M}$
  \end{algorithmic}
  \label{alg:greedy}
\end{algorithm}

Let set $\mathcal{M}$ be the set of variables and the corresponding constraints that are to be eliminated and, let set $S$ be the set of all variables that appear in the constraints in $\mathcal{M}$.
Given a set of variables $X$ and constraints $C$, the greedy heuristic algorithm consists of the following steps:
\begin{enumerate}
    \item Initialize set $\mathcal{M} = \{\}$ and set $\mathcal{S}=\{\}$.
    \item For each constraint in $C$, compute a set of variables that appear linearly in the constraint using the subroutine ${\tt linear\_vars}$.
    \item For each variable $x$ in the set of linear variables in a constraint, if $x$ is not already in $\mathcal{S}$:
      \begin{enumerate}
        \item Add the variable $x$ and the corresponding constraint $c$ to $\mathcal{M}$.
          $x$ has not been used by any previous constraint, so adding it will preserve lower triangularity.
        \item Add all the variables appearing in constraint $c$ to set $\mathcal{S}$.
          This prevents any of these variables, which would need to precede $x$ in a lower triangular
          ordering, from being added later in the algorithm.
      \end{enumerate}
\end{enumerate}


\subsubsection{Matching-based algorithm \rev{(LM)}}
\begin{figure}
    \centering
    \includegraphics[width=1\linewidth]{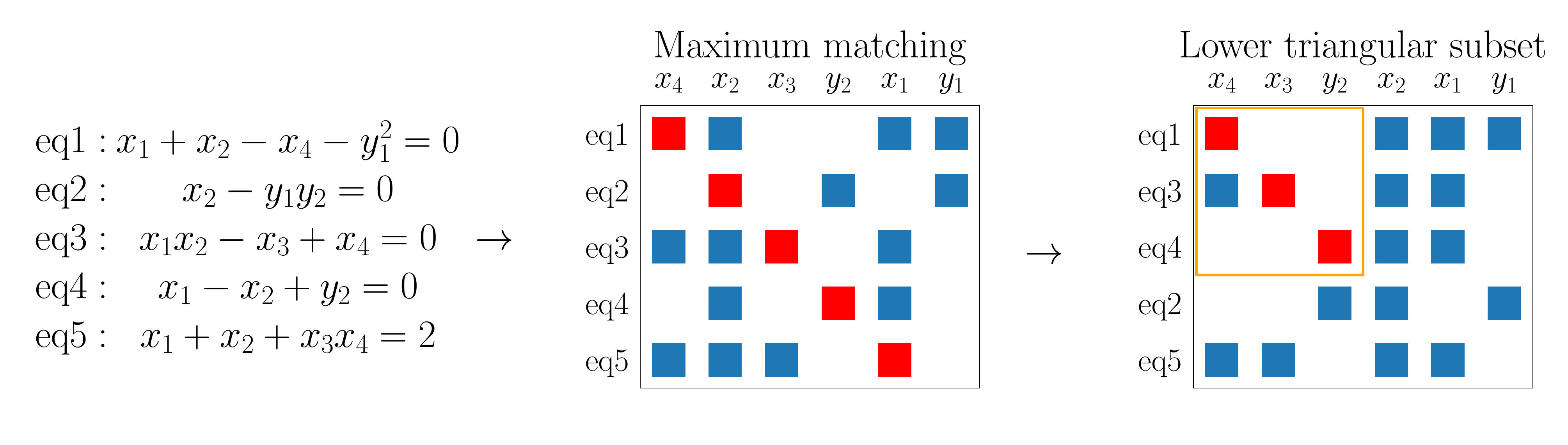}
    \caption{Illustration of the linear matching algorithm for the given equation system.
      Middle: A maximum matching of linear variable-constraint pairs
      ($\mathcal{M}_L$ in Algorithm \ref{alg:matching}).
      Right: A subset of the matching with a lower-triangular incidence matrix
      ($\mathcal{M}_T$ in Algorithm \ref{alg:matching}).}
    \label{fig:LM-strategy-schematic}
\end{figure}

The second heuristic we implement decomposes the maximum-elimination problem
into two well-studied subroutines. The first is to compute a maximum matching
on the linear bipartite graph of variables and constraints.
To generate a subset of this matching with a lower-triangular
incidence matrix, we construct the subgraph of the full bipartite graph (containing
nonlinear edges) induced by this matching, then compute the irreducible block
triangular partition of this subgraph.
In this block triangularization, any diagonal blocks of size greater
than one-by-one break lower triangularity.
To approximate the largest lower triangular subset of variable-constraint pairs 
in each of these diagonal blocks, we use the {\tt greedy}
method defined by Algorithm \ref{alg:greedy}.
We refer to this approach as the {\it linear matching} aggregation method, or LM for short.

The problem of identifying the largest lower triangular submatrix of a square, well-constrained
incidence matrix is the well-studied {\it tearing} problem
\cite{steward1965partitioning,elmqvist1994tearing}.
Sophisticated heuristics for solving this problem are available
\cite{fletcher1993ordering,tauber2014tearing}, and an exact algorithm
(that is not efficient in the worst case) has been developed for the
related minimum feedback arc set problem \cite{baharev2021mfasp}.
We use a comparatively simple heuristic and defer the integration of more
sophisticated tearing methods to future work.

Figure \ref{fig:LM-strategy-schematic} illustrates the linear-matching algorithm.
Given a set of equations, a maximum matching is computed with the matched edges highlighted in red in the incidence matrix. However, all the matched edges cannot be eliminated since eliminating both $x_1$ and $x_4$ is not possible since the expression for $x_4$ contains $x_1$ and the expression for $x_1$ contains $x_4$. Therefore, the largest lower triangular subset is computed for aggregation. The largest lower triangular subset is highlighted with the orange rectangle and the red edges indicate the variable constraint edges that will be aggregated. 

Pseudocode for the {\tt linear\_matching} method is given by Algorithm \ref{alg:matching}. Let $\mathcal{M}_T = \{\}$, be the set of variables and corresponding constraints to be eliminated. Given a set of equations with variables $\mathcal{X}$ and constraints $\mathcal{C}$, the matching based algorithm consists of the following steps:
\begin{enumerate}
    \item Compute a bipartite graph $G$ of the set of equations consisting of all edges.
    \item Compute a linear bipartite graph $G_L$ of the equations consisting of only linear edges.
    \item Compute a maximum matching $\mathcal{M}_L$ on the linear bipartite graph
    \item Compute the subgraph $G_\mathcal{M}$ of the bipartite graph $G$ induced by the maximum matching $\mathcal{M}_L$
    \item Compute a block triangular partition $\mathcal{T}$ of the subgraph induced by the matching using the $\tt block\_triangularize$ subroutine.
    \item For each block in $\mathcal{T}$:
      \begin{enumerate}
        \item If the size of the block is $1$, add the variable constraint pair in the block to $\mathcal{M}_T$
        \item If size of the block is $>1$, the $\tt greedy$ heuristic (Algorithm \ref{alg:greedy}) is used on the block to extract the variable constraint pairs to be added to $\mathcal{M}_T$.
      \end{enumerate}
\end{enumerate}
By extracting a lower triangular submatrix from each block in the block triangular partition,
the submatrix induced by the new matching $\mathcal{M}_T$ is lower triangular. Because
the block triangular partition was computed from a matching of only linear edges, $\mathcal{M}_L$,
the reduced matching $\mathcal{M}_T$ contains only linear edges as well.
The matchings $\mathcal{M}_L$ and $\mathcal{M}_T$ are illustrated in Figure \ref{fig:LM-strategy-schematic}.

The runtime of this algorithm is dominated by either the runtime of computing the
maximum linear matching $\rev{\mathcal{M}}_L$, or, if $G_L$ is small, the runtime of constructing
$G_L$ in the first place.
\begin{algorithm}
  \caption{: {\tt linear\_matching\_aggregation\_set}}
  \begin{algorithmic}[1]
    \State {\bf Inputs:} Sets of variables $X$ and constraints $C$
    \State $G = \text{\tt bipartite\_graph}(X, C)$
    \State $G_L = \text{\tt linear\_bipartite\_graph}(X, C)$
    \State $\mathcal{M}_L = \text{\tt maximum\_matching}(G_L)$
    \State $G_\mathcal{M} = \text{\tt induced\_subgraph}(G, \mathcal{M}_L)$
    \State $\mathcal{T} = \text{\tt block\_triangularize}(G_\mathcal{M}, \mathcal{M}_L)$
    \State $\mathcal{M}_T = \{\}$
    \For{$T\text{~\bf in } \mathcal{T}$}
      \If{$\text{\tt size}(T) = 1$}
        \State $\mathcal{M}_T \gets \mathcal{M}_T \cup T$
      \Else                                      
        \State $\mathcal{M}_T \gets \mathcal{M}_T \cup \text{\tt greedy}(T)$
      \EndIf
    \EndFor
    \State {\bf Return:} $\mathcal{M}_T$
  \end{algorithmic}
  \label{alg:matching}
\end{algorithm}

We note that the cardinality of the maximum matching on the linear bipartite graph,
$\mathcal{M}_L$ in Algorithm \ref{alg:matching}, is an upper bound on the number
of variables that can be aggregated using constraints in the form given by
Lemma \ref{lma:semilinear}. This algorithm therefore has the advantage
that the suboptimality of its solution, relative to a true maximum-cardinality
aggregation, can be bounded.

We also note that $\text{\tt greedy}(T)$ on line 12 of Algorithm \ref{alg:matching}
returns a set of at least
one edge, as there are at least $\text{\tt size}(T)$ edges of linear variable-constraint
incidence in the bipartite graph of variables and constraints in $T$.
This follows from 
the fact that $\mathcal{T}$ is constructed from a matching $\mathcal{M}_L$ with only
linear edges.
This gives us $\text{\tt size}(\mathcal{T})$, or the number of diagonal blocks in
the block triangular form, as a lower bound on the cardinality of the aggregation
returned by this algorithm. This result is summarized in Theorem \ref{thm:matching-bounds}.

\begin{theorem}
  Let $X$ be a set of variables and $C$ a set of constraints, with bipartite graph $G$
  and linear bipartite graph $G_L$. Let $n_{\rm match}$ be the cardinality of $\mathcal{M}_L$,
  a maximum matching of the linear bipartite graph. Let $G_\mathcal{M}$ be the subgraph
  of $G$ induced by $\mathcal{M}_L$, with a block triangular partition $\mathcal{T}$
  of cardinality $n_{\rm block}$. Let $\mathcal{A}$ be the matching returned by Algorithm
  \ref{alg:matching} that is used for aggregation, with cardinality $n_{\rm agg}$.
  Then\rev{,}
  \[n_{\rm block} \leq n_{\rm agg} \leq n_{\rm match}\rev{.}\]
  \label{thm:matching-bounds}
\end{theorem}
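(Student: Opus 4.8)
The plan is to decompose $n_{\rm agg}$ as a sum of per-block contributions and to bound each contribution above by $\text{size}(T)$ and below by $1$. First I would record two structural facts. Since $\mathcal{T}$ is an ordered partition of the perfect matching $\mathcal{M}_L$, the blocks are pairwise disjoint in both their variable and constraint nodes, and $\sum_{T\in\mathcal{T}}\text{size}(T)=|\mathcal{M}_L|=n_{\rm match}$. Moreover, each block $T$ carries exactly $\text{size}(T)$ variables and exactly $\text{size}(T)$ constraints, since every node of the block is incident to precisely one edge of the matching $T$. Because the edges that Algorithm \ref{alg:matching} contributes from distinct blocks lie on disjoint node sets, the returned set $\mathcal{A}=\mathcal{M}_T$ is itself a matching and $n_{\rm agg}=\sum_{T\in\mathcal{T}}c(T)$, where $c(T)$ denotes the number of edges contributed by block $T$ (either $T$ itself, via line 10, or $\texttt{greedy}(T)$, via line 12).

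For the upper bound I would show $c(T)\le \text{size}(T)$ for every block. If $\text{size}(T)=1$ this is immediate. If $\text{size}(T)>1$, then $c(T)=|\texttt{greedy}(T)|$, and \texttt{greedy} always returns a \emph{matching}: each constraint is processed once and contributes at most one edge, and any matched variable is placed into the working set $S$ and is never reused. Hence $|\texttt{greedy}(T)|$ is at most the smaller of the number of variables and the number of constraints in $T$, which is $\text{size}(T)$. Summing over blocks gives $n_{\rm agg}\le\sum_{T}\text{size}(T)=n_{\rm match}$.

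For the lower bound I would show $c(T)\ge 1$ for every block. The size-$1$ case is trivial. For $\text{size}(T)>1$, the essential observation is that every constraint $c$ in $T$ has at least one linearly-participating variable that also lies in $T$: namely its partner under $\mathcal{M}_L$, since $\mathcal{M}_L$ consists only of linear edges and matched nodes are never separated by the block-triangular partition. Consequently, when \texttt{greedy} processes the first constraint of $T$, the set $S$ is still empty, so the inner loop finds an eligible linear variable and adds one edge; thus $|\texttt{greedy}(T)|\ge 1$. Summing over the $n_{\rm block}$ blocks yields $n_{\rm agg}\ge n_{\rm block}$, which together with the previous paragraph establishes $n_{\rm block}\le n_{\rm agg}\le n_{\rm match}$.

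The step I expect to be the main obstacle is the claim underlying the lower bound — that each constraint of a diagonal block has a linear incident variable inside the same block — since it rests on the interaction between two subroutines: the block-triangularization is fed the matching $\mathcal{M}_L$ of linear edges, and matched pairs are kept together in a common block. I would make this precise by invoking the fact, established in Section \ref{sec:bipartite}, that \texttt{block\_triangularize} returns a partition of the supplied perfect matching, so each $T\in\mathcal{T}$ is a set of (linear) matching edges whose incident variables and constraints constitute the block; everything else in the argument is routine bookkeeping over the branches of Algorithm \ref{alg:matching}.
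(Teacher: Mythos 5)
Your proposal is correct and follows essentially the same route as the paper: a per-block decomposition of $n_{\rm agg}$, with the upper bound from the fact that {\tt greedy} contributes at most one edge per constraint (so at most ${\tt size}(T)$ per block, summing to $n_{\rm match}$ since $\mathcal{T}$ partitions $\mathcal{M}_L$), and the lower bound from the observation that the first constraint processed in each block has a linear variable (its $\mathcal{M}_L$ partner) available while $S$ is still empty, so each block contributes at least one edge. Your additional bookkeeping about disjointness of blocks and the matching property of $\mathcal{A}$ only makes explicit what the paper leaves implicit.
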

\begin{proof}
  Each block $T\in \mathcal{T}$ contributes at least one edge to the matching $\mathcal{A}$.
  If $\texttt{size}(T)=1$, it contributes exactly one edge. If $\texttt{size}(T) > 1$, it
  contributes at least one edge as the first constraint encountered in
  {\tt greedy} contains at least one linear variable (the variable matched with
  this constraint in $\mathcal{M}_L$).
  Therefore, $n_{\rm block} \leq n_{\rm agg}$.

  Each block $T\in \mathcal{T}$ contributes at most ${\tt size}(T)$ edges to $\mathcal{A}$.
  This is by construction, as {\tt greedy} contributes at most one edge per
  constraint provided as input, and there are ${\tt size}(T)$ constraints provided as
  input. If ${\tt size}(T)=1$, then exactly ${\tt size}(T)$ edges are contributed to
  $\mathcal{A}$. The number of edges in $\mathcal{A}$ is therefore at most 
  $\sum_{T\in \mathcal{T}} {\tt size}(T)$, which equals $n_{\rm match}$ as $\mathcal{T}$
  partitions $\mathcal{M}_L$. That is,
  \[n_{\rm agg} \leq \sum_{T\in \mathcal{T}} {\tt size}(T) = n_{\rm match}\rev{.}\]
\end{proof}

We note that the lower bound in Theorem \ref{thm:matching-bounds} is specific to the
particular linear maximum matching that was computed by the {\tt maximum\_matching}
subroutine. Computing a maximum matching that induces a highly decomposable block
triangular form has been studied by Lima et al. \cite{lima2006} using metaheuristic
methods. We leave the integration of heuristic or exact methods for choosing
a ``highly decomposable matching'' to future work.

\subsection{Fixed-variable aggregation \rev{(LD1)}}\label{sec:fixed_var_elimination}
A variable is {\it fixed} if it is the only variable in a linear equality constraint.
A simple aggregation strategy is to replace fixed variables with the value defined by
their single-variable constraints. Algorithm \ref{alg:fixed_variable}
describes the steps used to generate an ordered set of variables and constraints that
can be eliminated. The {\tt filter\_constraints\_fixed\_variable} subroutine filters
linear, degree-one constraints. That is, constraints of the form $y = a$ where $a$ is
a constant pass through the filter.

We refer to this algorithm as the {\it linear degree-1} aggregation method, or ``LD1'' for short.
The subsystem containing the filtered constraints is sent
to the {\tt linear\_matching} subroutine given in Algorithm \ref{alg:matching} to generate
the ordered set of variables and constraints that are eliminated.
The {\tt linear\_matching} subroutines prevents situations where two constraints are assigned to
eliminate a single variable. In this case of constraints each containing only a single variable,
this would imply either that one of the constraints is redundant or that the model is infeasible.

Because aggregating fixed variables and their defining constraints may reduce the number of
variables in other constraints, doing so may lead to more fixed variables in the model.
For this reason, this strategy is always implemented recursively until there are no fixed
variables left to eliminate.
Fixed-variable aggregation reduces the number of variables per constraint and the complexity
of expression graphs. It reduces the number of non-zeros in the Jacobian as it doesn't
introduce any additional variables into the problem.
\begin{algorithm}
  \caption{: {\tt degree\_1\_aggregation\_set}}
  \begin{algorithmic}[1]
    \State {\bf Inputs:} Sets of variables $X$ and constraints $C$
    \State $C' = \text{\tt filter\_constraints\_fixed\_variable}(C)$
    \State $\mathcal{M} = \text{\tt linear\_matching}(X, C')$
    \State {\bf Return:} $\mathcal{M}$
  \end{algorithmic}
  \label{alg:fixed_variable}
\end{algorithm}

\begin{table}
  \centering
  \caption{Subroutines used to filter constraints}
  \resizebox{\textwidth}{!}{
    \begin{tabular}{ccc}
      \toprule
    Subroutine & Inputs \& Outputs & Example acceptable constraint  \\
    \midrule
    {\tt filter\_constraints\_degree2}            & Set of constraints & $y = 2x^2$ \\
    {\tt filter\_constraints\_linear\_degree2}    & Set of constraints & $y = 2x + 3$ \\
    {\tt filter\_constraints\_equal\_coefficient} & Set of constraints & $y = x + 4$ \\
    {\tt filter\_constraints\_fixed\_variable}    & Set of constraints & $y = 1$ \\
    \bottomrule
  \end{tabular}
  }
  \label{tab:filters}
\end{table}

\subsection{Structure-preserving aggregation}

While aggregating as many variables and constraints as possible leads to a problem
with fewer variables (and often fewer constraints), the remaining constraints may
have more variables on average and more complicated nonlinear algebraic expressions.
To limit increases in density and complexity of the reduced-space problem, we
implement methods that attempt to preserve the structure of the remaining constraints.

These methods aggregate using constraints with at most two variables, as aggregating
a variable using such a constraint will never increase the number of variables in
other constraints. The structure-preserving methods differ by what other properties,
e.g. linearity, they require these defining constraints to have.
The algorithms to identify aggregation sets rely on constraint filters that are
summarized in Table \ref{tab:filters}, in order from most permissive to most
restrictive. We note that more permissive filters, e.g., {\tt filter\_constraints\_degree2},
still return ``less general'' constraints, such as a ``variable-fixing'' constraint $y=1$.

Aggregating variables using degree-2 constraints may decrease the number of variables
in other constraints. For example, aggregating
\[ y \leftarrow x^2 + 2 \]
into the constraint
\[ w = x + y \]
yields the new constraint
\[ w = x^2 + x + 2, \]
which has only two variables as opposed to the original three.
As in this example, such aggregations may lead to more two-variable constraints
in the reduced-space model.
The structure-preserving aggregation methods in this section are implemented
using the following procedure:
\begin{enumerate}
  \item Aggregate as many fixed variables as possible by recursively calling
    Algorithm \ref{alg:fixed_variable} and applying the resulting aggregation.
  \item Aggregate as many variables as possible using two-variable constraints
    by recursively applying a structure-preserving method.
\end{enumerate}

\subsubsection{Incidence-preserving aggregation \rev{(D2)}}\label{sec:no-fill agg}
The simplest structure-preserving strategy is to aggregate as many variables as possible
using constraints with at most two variables, at least one of which appears linearly.
We refer to this as the {\it degree-2} aggregation method, or D2 for short.
Because each defining constraint contains two variables, this method cannot increase
the number of variables in any constraint in the reduced-space model.
The method is described by Algorithm \ref{alg:nonlinear-degree2},
where the {\tt filter\_constraints\_degree2} subroutine
allows all degree-2 constraints to pass through the filter. The {\tt linear\_matching}
subroutine ensures that aggregated variables appear linearly in their defining constraints.
While this strategy does not increase the number of variables per constraint, it may
introduce nonlinearities into constraints that were previously linear.

\begin{algorithm}
  \caption{: {\tt degree\_two\_aggregation\_set}}
  \begin{algorithmic}[1]
    \State {\bf Inputs:} Sets of variables $X$ and constraints $C$
    \State $C' = \text{\tt filter\_constraints\_degree2}(C)$
    \State $\mathcal{M} = \text{\tt linear\_matching}(X, C')$
    \State {\bf Return:} $\mathcal{M}$
  \end{algorithmic}
  \label{alg:nonlinear-degree2}
\end{algorithm}

\subsubsection{Linearity-preserving elimination \rev{(LD2)}}
A more conservative strategy is to aggregate variables using only linear constraints
with two variables. We refer to this as the {\it linear degree-2} aggregation method,
or LD2 for short.
This method does not increase the number of variables per constraint, and cannot
convert a linear constraint into a nonlinear constraint.
\rev{This method is defined by Algorithm \ref{alg:linear-linking},
where \linebreak {\tt filter\_constraints\_linear\_degree2}}
allows linear constraints with two variables, i.e. constraints of the form $y = ax + b$
where $x$ and $y$ are scalar variables and $a$ and $b$ are constants, to pass through
the filter.
While the number of variables per constraint does not increase, constraints' linear
coefficients can change. For example, performing the aggregation
\[ y \leftarrow 100x + 1 \]
in the constraint
\[ w = 2y \]
yields the new constraint
\[ w = 200x + 2, \]
which has linear coefficients of $[1~-200]$ (when both variables are brought to the
left-hand-side) instead of the original coefficients of $[1~-2]$.
A similar algorithm is implemented in the Pyomo {\tt nl\_v2} writer with the
{\tt linear\_presolve=True} option.
\begin{algorithm}
  \caption{: {\tt linear\_degree\_two\_aggregation\_set}}
  \begin{algorithmic}[1]
    \State {\bf Inputs:} Sets of variables $X$ and constraints $C$
    \State $C' = \text{\tt filter\_constraints\_linear\_degree2}(C)$
    \State $\mathcal{M} = \text{\tt linear\_matching}(X, C')$
    \State {\bf Return:} $\mathcal{M}$
  \end{algorithmic}
  \label{alg:linear-linking}
\end{algorithm}

\subsubsection{Equal coefficient degree-2 aggregation \rev{(ECD2)}}
The most restrictive structure-preserving aggregation strategy attempts to preserve
entries in the Jacobian matrix of remaining constraints. To do this, we aggregate
variables using linear constraints with two variables, where the variables have
coefficients with the same magnitude. For example, a constraint
\[ y = x + a \]
may be used to eliminate $y$. If $x+a$ is substituted into constraints that
do not already contain $x$, Jacobian values in the reduced-space model will be
unchanged. That is, the derivative of the original constraint with respect to $x$
is the same as the derivative of the new constraint with respect to $y$.
If $x+a$ is substituted into constraints that already contain $x$,
the new constraint will have one fewer variable than the original constraint,
and the Jacobian entry corresponding to the derivative with respect to $x$
may change.
Like the linear degree-2 aggregation strategy, this strategy preserves linearity.

We refer to this as the {\it Equal coefficient degree-2} aggregation method, or ECD2 for short.
This method is described by Algorithm \ref{alg:trivial}.
Here, the {\tt filter\_constraints\_equal\_coefficient} subroutine allows constraints of the form
$y=x+a$ to pass through the filter. The subsystem formed by the filtered constraints
and the variables that appear in the filtered constraints is passed to the {\tt
linear\_matching} subroutine to obtain an ordered set of variables and constraints to
eliminate.
\begin{algorithm}
  \caption{: {\tt equal\_coefficient\_aggregation\_set}}
  \begin{algorithmic}[1]
    \State {\bf Inputs:} Sets of variables $X$ and constraints $C$
    \State $C' = \text{\tt filter\_constraints\_equal\_coefficient}(C)$
    \State $\mathcal{M} = \text{\tt linear\_matching}(X, C')$
    \State {\bf Return:} $\mathcal{M}$
  \end{algorithmic}
  \label{alg:trivial}
\end{algorithm}

\section{Test problems}
\label{sec:problems}

This section briefly describes the test problems on which we evaluate the methods
described in Section \ref{sec:methods}.
To study the effect of aggregation method on solve time, we solve nominal instances
of each of the four test problems after applying each aggregation method.
To study the \rev{effect} of aggregation method on convergence reliability, for each
of the three test problems that support parameters and each of the aggregation methods,
we sample two parameters at eleven uniformly spaced points each (121 total points)
and perform a parameter sweep.

\subsection{Distillation column dynamic optimization}
Distillation is a widely employed separation technique based on the difference in relative
volatility. Dynamic distillation column models play an important role in optimally
controlling the distillation column. The dynamic model consists of differential algebraic equations
(DAEs) for mass balance that scale with the number of trays, control horizon, and the
time discretization along with non-linear constraints linking the mole fractions. The
resulting problem is a nonlinear dynamic optimization problem \rev{containing bilinear terms and rational nonlinearities}.
The full formulation for this model (DIST) may be found in \cite{parker2022}. 

\subsection{Moving bed reactor optimal operation}
A moving bed (MB) reactor is a two-phase chemical reactor in which gas and solid material
streams flow in counter-current directions, reacting as they come in contact with
each other. The inlet flow rates of gas and solid streams may be adjusted to achieve
desired product compositions or temperatures. As a test problem, we consider a moving
bed reactor with methane and iron oxide inlet streams operating at steady state conditions.
The methane reduces the iron oxide, producing carbon dioxide and water vapor via the
reaction
\begin{equation*}
  \ch{CH4} + 12\ch{Fe2O3} \rightarrow 2\ch{H2O} + \ch{CO2} + 8\ch{Fe3O4}\rev{.}
\end{equation*}
This reactor is modeled with nonlinear, one-dimensional differential-algebraic equations
discretized along the length domain of the reactor, as described by Ostace et al.
\cite{ostace2018} and Okoli et al. \cite{okoli2020} and implemented in the IDAES
model library \cite{lee2021idaes}. \rev{The model consists of multilinear and rational nonlinearities along with terms with fractional powers.}
In this test problem, we penalize the deviation from a set-point of 95\% oxygen
carrier conversion and an inlet solid flow rate of 591 kg/s.

\subsection{Pipeline network dynamic optimization}
The gas pipeline network optimization problem (PIPE) is a constrained NLP consisting of a
system of DAEs modeled in IDAES \cite{lee2021idaes}. The goal is to minimize the gas
transmission cost while meeting the customer demands and satisfying the mass and momentum
balance equations for gas transport \cite{NAIK20231847}. \rev{The gas network model consists of nonlinear terms with ratios and fractional powers to compute power consumption in the compressors}
IDAES provides a modular approach
to model the gas pipeline flowsheet by leveraging existing unit models and material
properties. However, this approach also leads to additional redundant variables and
constraints that link the individual unit models and property packages to build the
overall flowsheet. 

\subsection{AC optimal power flow}
Alternating current optimal power flow (ACOPF) describes the task of selecting power
generator dispatch levels and bus voltages to minimize cost while satisfying the AC
power flow equations over a network \cite{cain2012}. \rev{The model consists of $sine$ and $cosine$ nonlinearities arising from the power flow equations.}
We use as a test problem a
4,917-bus case from the PGLib-OPF test set \cite{pglib}. The test problem data is
parsed and converted into a Pyomo model by Egret \cite{knueven2019}.

\section{Computational results}
\label{sec:results}

This section describes the results of applying the methods of Section \ref{sec:methods}
to the problems of Section \ref{sec:problems} before solving with IPOPT \cite{ipopt}
version 3.14.17 with function and derivative evaluations performed by the AMPL
Solver Library (ASL) \cite{asl}.
All test problems are implemented in Pyomo, and, where applicable, differential
equations are discretized using Pyomo.DAE \cite{pyomodae}.
The methods to compute aggregation sets, described in Section \ref{sec:methods},
are implemented using Incidence Analysis \cite{parker2023dulmage}, a Pyomo extension
for analyzing incidence graphs.
Structural results are presented in Section \ref{sec:structural},
runtime results are presented in Section \ref{sec:runtime},
and convergence reliability results are presented in Section \ref{sec:convergence}.

Structural results refer to solver-agnostic model properties that do not depend
on particular values of variables and parameters at which the model instance is evaluated.
Examples of structural results include the numbers of constraints and variables, and
the number of nonzero entries in the KKT matrix.
As structural results do not depend on the chosen parameter values, we compute
these results for nominal instances of each test problem with each
aggregation method applied.
Structural results are computed using Incidence Analysis, PyNumero \cite{pynumero},
and the Pyomo {\tt .nl} writer's {\tt AMPLRepn} data structure to investigate the
structure of incidence matrices, derivative matrices, and expression graphs.

Runtime results are those obtained by solving a particular instance of a model
with a particular solver (here, IPOPT).
\rev{These include solve time and number of iterations.}
To limit the impact of difficult instances, which may have extremely long solve times,
and to avoid questions about how to penalize instances that don't converge,
we compare detailed timing statistics only for nominal instances of each test problem
with each aggregation method applied.
Runtime results are obtained by timing evaluation callbacks in the CyIpopt interface
to IPOPT \cite{cyipopt}.

Convergence reliability results are computed over a sweep of parameter values
for the three test problems that support parameters (distillation, moving bed,
and pipeline optimization problems).
For each of these problems, two parameters in the optimization formulation
are chosen, along with ranges over which we would like to vary these parameters.
Parameter ranges are chosen to yield challenging optimization problems that may
not converge within the 3000-iteration limit.
For each parameter, we sample eleven uniformly spaced values for a total of
121 parameter combinations at which we solve the optimization problem.
We perform this parameter sweep after applying every aggregation method and compare
the results.

The code used to produce the results presented in this section may be obtained
at \url{https://github.com/Robbybp/variable-elimination}.
The results presented are collected on HPE ProLiant XL170r server nodes with
two Intel 2.1 GHz CPUs and 128 GB of RAM running the RHEL8 Linux distribution
and using Python 3.11.5.

\begin{table}
  \centering
  \caption{Summary of method names and algorithm numbers}
  \rev{
    \resizebox{\textwidth}{!}{
  \begin{tabular}{cccc}
    \toprule
    Method name & Shorthand name & Algorithm & Example acceptable constraint \\
    \midrule
    Linear degree-1 & LD1 & \ref{alg:fixed_variable} & $y = 1$ \\
    Equal-coefficient degree-2 & ECD2 & \ref{alg:trivial} & $y = x$ \\
    Linear degree-2 & LD2 & \ref{alg:linear-linking} & $y = 2x$ \\
    Degree-2 & D2 & \ref{alg:nonlinear-degree2} & $y = x^2$ \\
    Greedy & GR & \ref{alg:greedy} & $y = f(w,x,\dots)$ \\
    Linear-matching & LM & \ref{alg:matching} & $y = f(w, x,\dots)$ \\
    \bottomrule
  \end{tabular}
}}
  \label{tab:method-summary}
\end{table}

\subsection{Structural results}
\label{sec:structural}

We first compare the effect of variable aggregation on model structure. We define
model structure as solver-agnostic properties of the optimization model that do not
depend on particular variable values, such as numbers of variables and constraints.
A summary of model structures after aggregating with each method is given in
Table \ref{tab:structure}.
\rev{For reference, the shorthand name and algorithm number
of each method can be found in Table \ref{tab:method-summary}.}

Here, ``Var.'' is the number of variables in the model, ``Con.'' is the number
of constraints, ``Elim.'' is the number of variables that were eliminated,
``NNZ/Con.'' is the average number of (structural) nonzero entries in the Jacobian matrix per constraint,
``Lin. NNZ/Con.'' is the average number of nonzero entries in the Jacobian matrix corresponding to linear variable-constraint incidence,
and ``Hess. NNZ'' is the number of nonzeros in the Hessian of the Lagrangian.

\begin{table}[]
    \centering
    \caption{Structural properties of models after applying each aggregation method}
    \vspace{0.2cm}
    \begin{tabular}{ccccccccccc}
      \toprule
Model     & Method    & Var.   & Con.   & Elim. & NNZ/Con. & Lin. NZ/Con. & Hess. NNZ\\
\midrule
\multirow{7}{*}{DIST} & --        &  30368 &  30068 &     0 & 3.88 & 1.69 &  48032\\
                      & LD1       &  30300 &  30000 &    68 & 3.89 & 1.69 &  48000\\
                      & ECD2      &  29400 &  29100 &   968 & 3.62 & 1.68 &  47400\\
                      & LD2       &  29068 &  28768 &  1300 & 3.64 & 1.67 &  47400\\
                      & D2        &  19468 &  19168 & 10900 & 3.97 & 2.00 &  47100\\
                      & GR        &   9932 &   9632 & 20436 & 4.92 & 1.00 &  47700\\
                      & LM        &   9900 &   9600 & 20468 & 4.93 & 1.00 &  47700\\
\midrule
\multirow{7}{*}{MB}   & --        &    870 &    869 &     0 & 3.09 & 1.39 &   1869\\
                      & LD1       &    780 &    779 &    90 & 2.97 & 1.34 &   1451\\
                      & ECD2      &    777 &    776 &    93 & 2.95 & 1.34 &   1366\\
                      & LD2       &    592 &    591 &   278 & 3.24 & 1.28 &   1285\\
                      & D2        &    491 &    490 &   379 & 3.38 & 1.34 &   1145\\
                      & GR        &    300 &    299 &   570 & 5.24 & 0.80 &   1609\\
                      & LM        &    167 &    166 &   703 & 9.23 & 0.02 &   1524\\
\midrule
\multirow{7}{*}{OPF}  & --        &  61349 &  87120 &     0 & 3.06 & 2.27 &  75323\\
                      & LD1       &  58969 &  89500 &  2380 & 2.90 & 2.16 &  72981\\
                      & ECD2      &  55891 &  92578 &  5458 & 2.81 & 2.09 &  70249\\
                      & LD2       &  55567 &  92902 &  5782 & 2.80 & 2.08 &  70113\\
                      & D2        &  50650 &  97819 & 10699 & 2.65 & 1.64 &  70063\\
                      & GR        &  27152 & 121317 & 34197 & 3.20 & 2.41 & 149351\\
                      & LM        &  10396 & 138073 & 50953 & 3.93 & 0.24 &  80021\\
\midrule
\multirow{7}{*}{PIPE} & --        &  12293 &  12221 &     0 & 3.00 & 1.70 &  33506\\
                      & LD1       &  10372 &  10411 &  1921 & 2.56 & 1.72 &  11155\\
                      & ECD2      &   7281 &   7731 &  5012 & 2.67 & 1.68 &   9414\\
                      & LD2       &   5252 &   6281 &  7041 & 2.74 & 1.73 &   9414\\
                      & D2        &   4660 &   5689 &  7633 & 2.81 & 1.80 &   7708\\
                      & GR        &   5492 &   6834 &  6801 & 3.33 & 0.97 &  16627\\
                      & LM        &   1124 &   2898 & 11169 & 8.26 & 0.92 &  25393\\     
\bottomrule
    \end{tabular}
    \label{tab:structure}
\end{table}
\begin{table}[]
    \centering
    \caption{Number of aggregations compared with bounds for the linear-matching
    strategy}
    \vspace{0.2cm}
    \begin{tabular}{ccccccccccc}
\toprule
Model     & Method    & Lower bound & Number of Eliminations & Upper bound\\
\midrule
DIST      & LM        &  1868 & 20468 & 30068\\
MB        & LM        &   703 &   703 &   703\\
OPF       & LM        & 49794 & 50953 & 51488\\
PIPE      & LM        & 11169 & 11169 & 11197\\
\bottomrule
    \end{tabular}
    \label{tab:matching-structure}
\end{table}
\begin{figure}[h]
  \centering
  \includegraphics[width=0.8\textwidth]{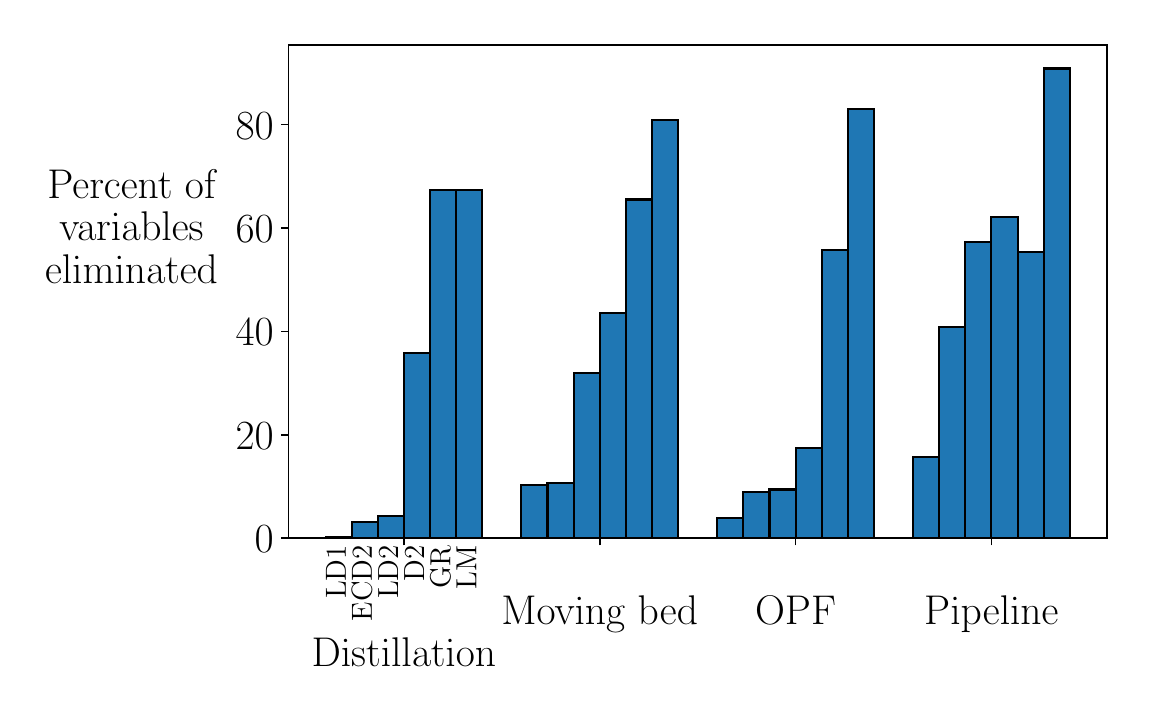}
  \caption{Percent of variables eliminated
  by each method for each model.
  Methods are in the same order as presented in Table \ref{tab:structure}.}
  \label{fig:frac-elim}
\end{figure}
We first note that the different aggregation methods eliminate significantly different
numbers of variables for each model. Simply eliminating fixed variables can remove up
to 10\% of the variables in a model (e.g. moving bed and pipeline problems), while using an
approximate-maximum aggregation strategy eliminates up to 90\% of a model's variables.
The linear-degree-2 strategy eliminates anywhere from 4\% of a model's variables
(in the distillation model) to 57\% of a model's variables (in the pipeline model).
The percent of variables eliminated by each model-method combination is illustrated in
Figure \ref{fig:frac-elim}.
For a given strategy, the wide variation in fraction of variables eliminated among
different models suggests that we may expect significantly different runtime and reliability
results when applying the same aggregation strategy to different models.
\begin{figure}[h]
  \centering
  \includegraphics[width=0.5\textwidth]{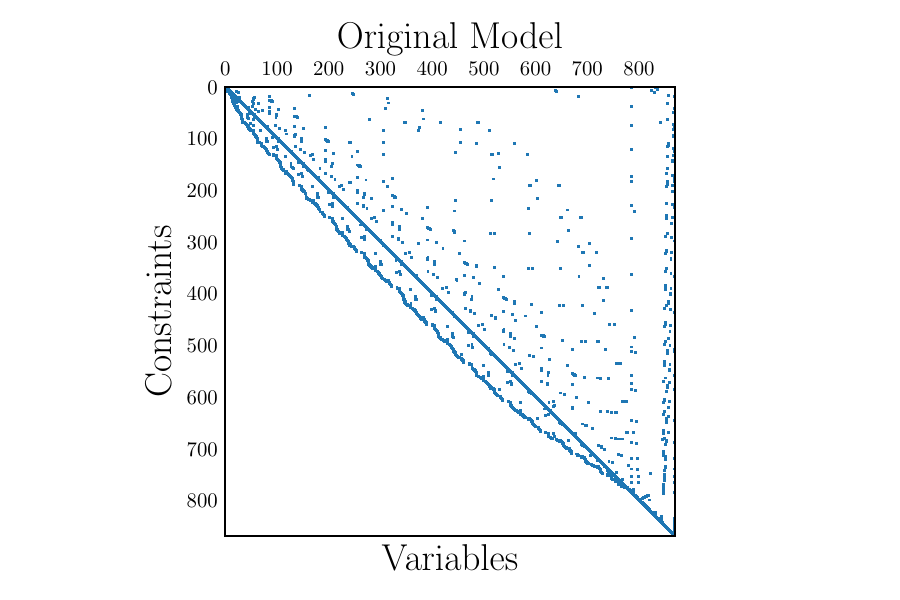}
  \hspace{-0.1\textwidth}\includegraphics[width=0.5\textwidth]{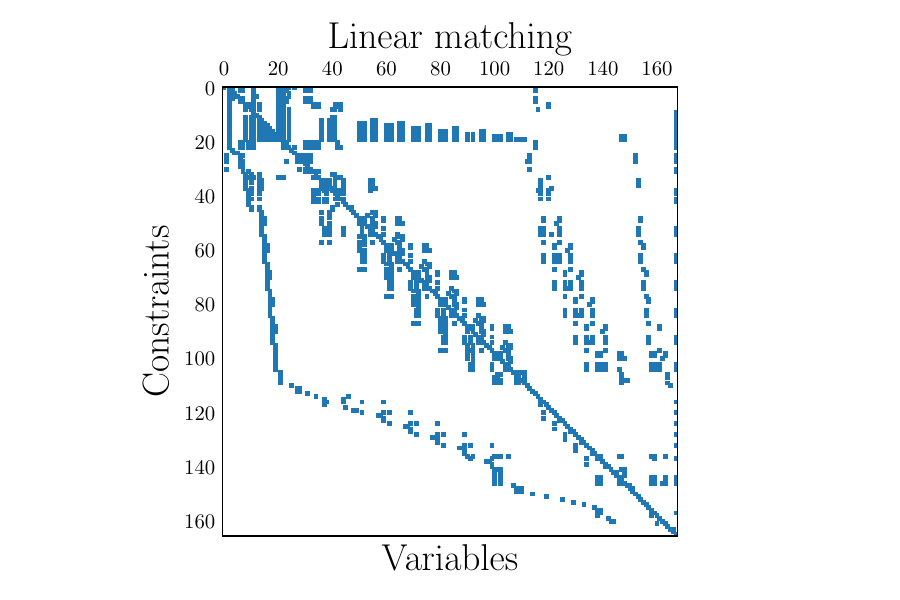}
  \caption{Incidence matrix for the moving bed optimization problem before variable aggregation and after variable aggregation using linear matching}
  \label{fig:incidence-matrix-mb}
\end{figure}

The upper and lower bounds on number of aggregations performed by the linear-matching
strategy according to Theorem \ref{thm:matching-bounds} are shown in Table \ref{tab:matching-structure}.
In most cases, the linear-maximum aggregation strategy comes close to the
upper bound on the number of eliminated variables, which is computed by performing a
maximum matching on the linear-incidence graph. Only for the distillation column
model is there a gap larger than 1\%.
For the distillation model, the block triangular form induced by the linear maximum
matching is not highly decomposable, as indicated by the relatively low lower bound.
However, the greedy procedure for recovering a lower triangular incidence matrix
recovers an aggregation set with cardinality 68\% of our upper bound.
In the moving bed model, by contrast, the linear maximum matching induces a perfectly
decomposable block triangular form. The OPF and pipeline models have maximum matchings
that lead to highly decomposable block triangular forms.
For the pipeline problem, the greedy procedure recovers only one variable and constraint
per block in the block triangularization, yielding a number of eliminations equal to
our lower bound. This is because, for the particular maximum matching computed,
all diagonal blocks in the block triangularization of size either $1\times 1$ or $2\times 2$.
In the latter case, only one variable can possibly be recovered, so the greedy
procedure for recovering a lower triangular matrix cannot improve upon the lower bound.


Increased density of Jacobian and Hessian matrices in the aggregated problem is
a concern that has been expressed by Gill et al. \cite{gill1981} and
Achterberg et al. \cite{achterberg2020}. For this reason, most of our aggregation
strategies are chosen with the intention of not increasing density in the remaining
constraints. Indeed, density stays approximately constant after applying linear-degree-1,
equal-coefficient-degree-2, linear-degree-2, or degree-2 strategies.
However, the approximate-maximum aggregation strategies do not consider any limits
on the degree of eliminated variables and constraints nodes, so they may lead to large
increases in problem density. 
In the moving bed and pipeline problems, this strategy
increases the number of variables incident on each constraint by factors of almost three,
while for the other problems, the increase in density is more modest. 
Figure \ref{fig:incidence-matrix-mb} shows the increase in density of the incidence matrix due
to variable aggregation using linear matching. Additionally, the size of the incidence
matrix decreases due to variable aggregation as variable-constraint pairs are eliminated
from the problem. Although the density of the incidence matrix
increases, it is still a relatively sparse matrix.

\begin{figure}[h]
  \centering
  \includegraphics[width=0.8\textwidth]{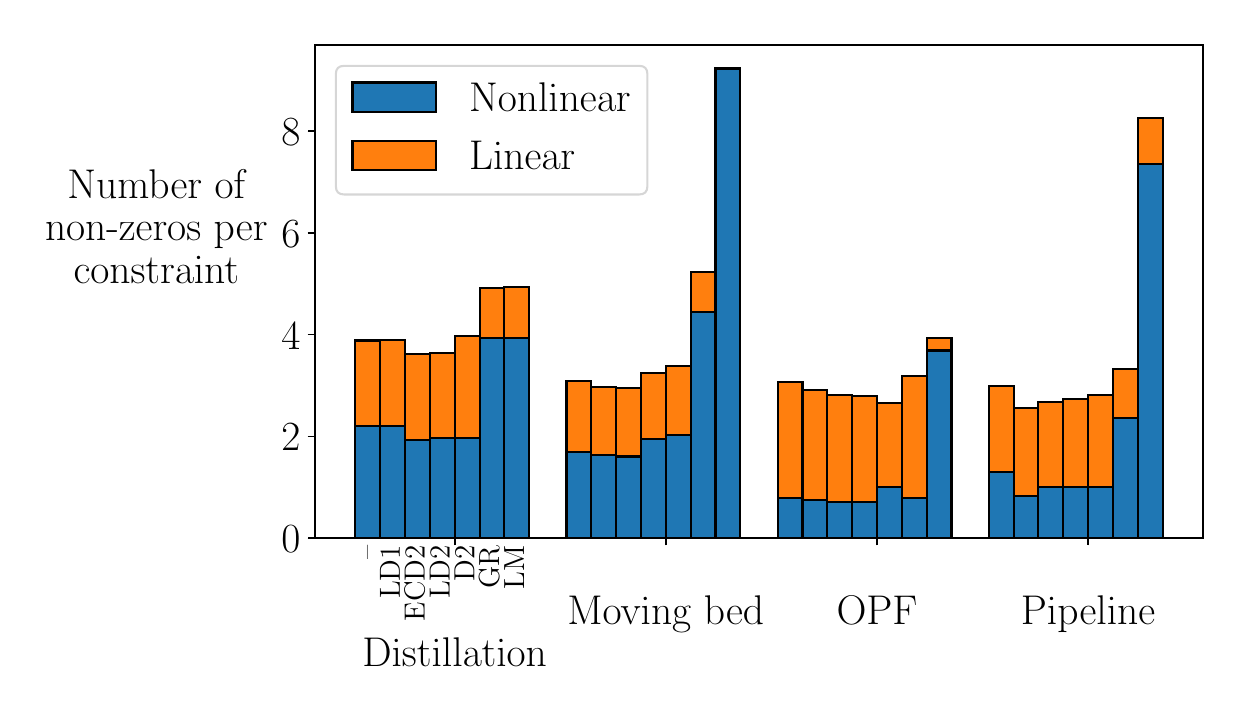}
  \caption{Number of linear and non-linear non-zeros per constraint after variable aggregation
  by each method for each model.
  Methods are in the same order as presented in Table \ref{tab:structure}}
  \label{fig:nnz-per-con}
\end{figure}

In addition to increased density of remaining constraints, aggressive variable
aggregation may result in optimization problems that are ``more nonlinear.''
We quantify the increase in nonlinearity by measuring the average number of
variables appearing linearly and nonlinearly in each constraint. Figure \ref{fig:nnz-per-con} demonstrates that for all the optimization problems, the number of linear variables per constraint
decreases significantly when the linear-matching aggregation algorithm is
used, even while the total number of nonzeros per constraint increases. On the other hand, the structure-preserving strategies approximately preserve the total non-zeros per constraint for all test problems.

\subsection{Runtime results}
\label{sec:runtime}

\begin{figure}[h]
  \centering
  \includegraphics[width=0.9\textwidth]{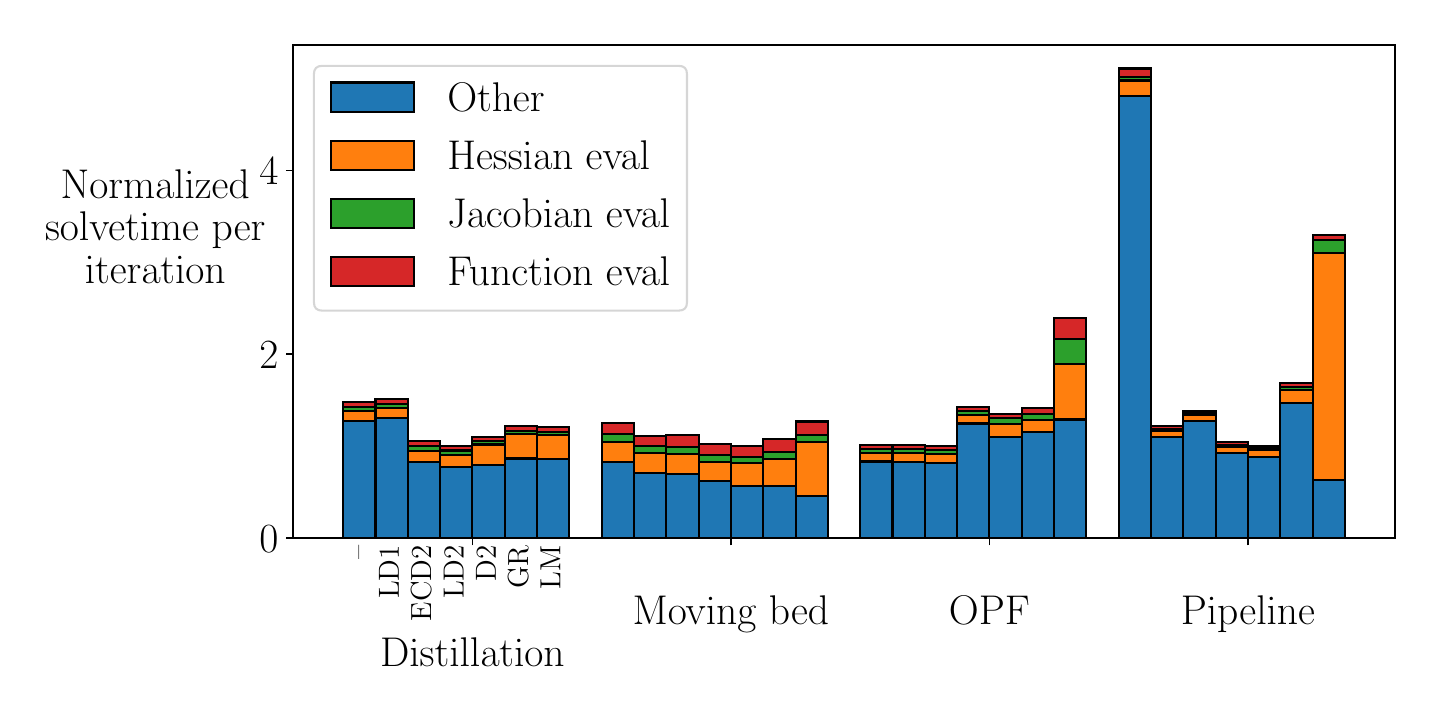}
  \caption{Solve time per iteration spent in each category by each method for each model. Methods are in the same order as presented in Table \ref{tab:solvetime}. The time is normalized by the fastest solve time so that the fastest aggregation strategy has a normalized solve time per iteration of $1.0$.}
  \label{fig:solve-time-breakdown}
\end{figure}
Variable aggregation can also have a significant effect on an optimization problem's
solve time.
We note that the direction of impact of variable aggregation on solve time is not
immediately obvious. Aggregating variables leads to a smaller, denser optimization
problem with potentially more inequality constraints (if many bounded variables are
eliminated) and potentially more nonlinear constraints (if many variables are replaced
with nonlinear expressions).
The dominant computational costs in an interior point method are function and derivative
evaluations and KKT matrix factorization. These are less expensive for a smaller problem,
but KKT matrix factorization is more expensive for a denser problem, and Hessian
evaluation is more expensive for a denser problem with more nonlinear constraints.
For these reasons, it is important to study the breakdown of runtime to understand
which factors are contributing to speedup or slow-down caused by variable aggregation.
Finally, an interior point method may take \rev{a significantly different number of iterations}
with full-space and aggregated optimization problems. While it is difficult to predict
which formulation will converge in fewer iterations, it is important to observe trends
that occur for a specific problem or problem class.

In this section we consider each problem's solve time with IPOPT \cite{ipopt} when
using each of the proposed aggregation strategies. The times spent aggregating variables,
solve times, iteration counts, and solve time breakdowns are given in Table
\ref{tab:solvetime}.

In Table \ref{tab:solvetime}, $t_{\rm build}$ is the time to build the Pyomo optimization
model, $t_{\rm elim}$ is the time to compute the sets of variables and constraints to
aggregate and perform the aggregation in-place, $t_{\rm init}$ is the time to initialize
data structures required for the optimization solve, $t_{\rm solve}$ is time
spent by IPOPT solving the optimization problem, and ``Iter.'' is the number of iterations
reported by IPOPT. ``Func.'', ``Jac.'', and ``Hess.'' are percentages of solve time spent
in callbacks used by IPOPT for function, Jacobian, and Hessian evaluations. ``Other''
is the percentage of solve time spent in IPOPT itself (rather than one of these callbacks),
which we assume is dominated by KKT matrix
factorization.\footnote[3]{This is difficult to obtain programmatically, but by inspecting IPOPT's logs we
see, in the unmodified distillation problem for example, that IPOPT spends 70\% of its time
(i.e., 70\% of the ``Other'' category)
in a routine called ``\texttt{LinearSystemFactorization}.''}
These percentages may not add up to 100 due to rounding.

\begin{table}
  \centering
  \caption{Runtime statistics of models after applying each aggregation method}
  \vspace{0.2cm}
\resizebox{\textwidth}{!}{
  \begin{tabular}{cccccccccccccc}
    \toprule
    \multirow{2}{*}{Model} & \multirow{2}{*}{Method} &\multicolumn{4}{c}{Total times (s)} & \multirow{2}{*}{Iter.} & \multicolumn{4}{c}{Solve time breakdown (\%)} & \multirow{2}{*}{Objective}\\
    & & $t_{\rm build}$ & $t_{\rm elim}$ & $t_{\rm init}$ & $t_{\rm solve}$ & & Func. & Jac. & Hess. & Other\\
\midrule
\multirow{7}{*}{DIST} & --        & \multirow{7}{*}{0.8} &    -- &   2.7 &   3.4 &  19 &   3 &   3 &   7 &  87 & 35.4\\
                      & LD1       &                      &  31.8 &   2.3 &   3.0 &  19 &   3 &   3 &   8 &  86 & 35.4\\
                      & ECD2      &                      &  38.3 &   2.4 &   1.6 &  17 &   5 &   5 &  12 &  78 & 35.4\\
                      & LD2       &                      &  38.0 &   2.4 &   1.9 &  18 &   5 &   5 &  14 &  76 & 35.4\\
                      & D2        &                      &  44.4 &   2.1 &   1.5 &  13 &   4 &   3 &  21 &  72 & 35.4\\
                      & GR        &                      &  17.7 &   2.1 &   1.5 &  13 &   5 &   3 &  24 &  69 & 35.4\\
                      & LM        &                      &  29.3 &   2.1 &   1.5 &  13 &   5 &   3 &  25 &  68 & 35.4\\
\midrule                                                                                                                      
\multirow{7}{*}{MB}   & --        & \multirow{7}{*}{4.1} &    -- &   0.5 &   0.1 &   9 &  10 &   6 &  19 &  65 & 0.00\\
                      & LD1       &                      &   2.7 &   0.5 &   0.1 &   9 &  10 &   7 &  19 &  63 & 0.00\\
                      & ECD2      &                      &   2.7 &   0.5 &   0.1 &   9 &  12 &   6 &  20 &  62 & 0.00\\
                      & LD2       &                      &   3.0 &   0.5 &   0.1 &   9 &  11 &   6 &  21 &  62 & 0.00\\
                      & D2        &                      &   3.3 &   0.4 &   0.1 &  10 &  12 &   6 &  27 &  56 & 0.00\\
                      & GR        &                      &   1.0 &   0.5 &   0.1 &  10 &  12 &   6 &  31 &  51 & 0.00\\
                      & LM        &                      &   1.5 &   0.5 &   0.1 &  10 &  11 &   5 &  49 &  35 & 0.00\\
\midrule                                                                                                                      
\multirow{7}{*}{OPF}  & --        & \multirow{7}{*}{2.8} &    -- &   4.3 &   9.8 &  61 &   4 &   4 &   9 &  82 & 1.39E+06\\
                      & LD1       &                      &  65.4 &   4.1 &   9.8 &  61 &   4 &   4 &   9 &  82 & 1.39E+06\\
                      & ECD2      &                      &  69.0 &   4.5 &  12.3 &  79 &   4 &   4 &  10 &  82 & 1.39E+06\\
                      & LD2       &                      &  76.7 &   4.5 &  44.9 & 198 &   3 &   3 &   7 &  87 & 1.39E+06\\
                      & D2        &                      &  84.5 &   4.6 &  42.5 & 200 &   3 &   4 &  10 &  82 & 1.39E+06\\
                      & GR        &                      &  30.7 &   5.0 &  16.8 &  75 &   4 &   5 &  10 &  82 & 1.39E+06\\
                      & LM        &                      &  62.5 &   8.1 &  27.4 &  73 &   9 &  11 &  26 &  54 & 1.39E+06\\
\midrule                                                                                                                      
\multirow{7}{*}{PIPE} & --        & \multirow{8}{*}{6.0} &    -- &   1.4 &   6.8 &  49 &   1 &   1 &   4 &  95 & 6.27E+03\\
                      & LD1       &                      &  21.5 &   1.1 &   1.3 &  42 &   2 &   2 &   5 &  90 & 6.27E+03\\
                      & ECD2      &                      &  24.8 &   1.2 &   1.6 &  43 &   2 &   2 &   5 &  92 & 6.27E+03\\
                      & LD2       &                      &  25.2 &   1.4 &   1.1 &  43 &   2 &   2 &   7 &  89 & 6.27E+03\\
                      & D2        &                      &  26.0 &   1.1 &   1.2 &  43 &   3 &   2 &   8 &  88 & 6.27E+03\\
                      & GR        &                      &   7.0 &   1.3 &   1.4 &  30 &   3 &   2 &   9 &  87 & 6.27E+03\\
                      & LM        &                      &  29.9 &   1.8 &   3.5 &  40 &   2 &   4 &  75 &  19 & 6.27E+03\\
\bottomrule
  \end{tabular}
}
  \label{tab:solvetime}
\end{table}

Figure \ref{fig:solve-time-breakdown} shows the normalized solve time per iteration in each category for every method and every optimization model. In two of the four problems considered (distillation and pipeline), variable
aggregation significantly reduces solve time. However, even for these two problems,
solve time does not uniformly decrease as more variables are eliminated.
For example, the pipeline model takes 3.6 s to solve with the linear-matching
strategy, but only 1.3 s to solve with the linear-degree-2, degree-2, or
greedy strategies. For the OPF model, variable aggregation generally increases
solve time, either due to larger iteration counts or an increase in the number of
inequality constraints. For the moving bed model, variable aggregation does not have
a measurable impact on solve time. 

While variable aggregation can cause a significant reduction in solve time, it can
also lead to expensive Hessian evaluations, introducing a new computational bottleneck.
For all four models, the linear-matching strategy leads to a significantly larger
fraction of solve time spent in Hessian evaluation, with a general trend that
Hessian evaluation time (as a fraction of solve time) increases as more variables
are eliminated (Figure \ref{fig:solve-time-breakdown}). When solving the pipeline model after applying the linear-matching
aggregation strategy, Hessian evaluation is responsible for 75\% of total solve time
and is the clear computational bottleneck in this solve.
We note that our implementation takes advantage of the ``defined variable'' data
structure in the ASL to not duplicate expressions when substituting an expression
for a variable many times. Increases in Hessian evaluation times are due to the
changing structure of expression graphs, rather than an increase in total size of
expression graphs.

In contrast to Hessian evaluation time, the time spent in IPOPT (``Other'' in Table
\ref{tab:solvetime}) decreases as more variables are eliminated. This suggests that
KKT matrix factorization benefits from the (generally) smaller systems in the
aggregated problems and is not significantly slowed down by the accompanying increase
in density.


\subsection{Convergence reliability}
\label{sec:convergence}
As optimization problems before and after aggregation have different constraints,
objectives, and derivative matrices, optimization algorithms may take significantly
different paths through variable-space to arrive at solutions of the two problems.
In some cases, one formulation may fail to converge within a specified iteration
limit. We say that the formulation that converges more often is more reliable.
As suggested by Parker et al. \cite{parker2022}, improving convergence reliability may
be a reason to aggregate variables in nonlinear optimization problems.

In this section, we compare convergence reliability of each of our proposed
elimination methods for distillation, moving bed, and pipeline test problems.
We do not perform a parameter sweep for the OPF test problem as Egret \cite{knueven2019}
does not construct these problems with easily mutable parameters.
Here, reliability is measured for each test problem by performing a parameter sweep
over two input parameters and counting the instances that are able to solve within
3,000 iterations with the IPOPT nonlinear solver.

Convergence results of a parameter sweep over relative volatility and the feed mole
fraction of heavy component for the distillation optimal control problem are shown,
for each method, in Figure \ref{fig:distill-convergence}.
Results of a parameter sweep varying inlet solid temperature and inlet solid flow rate
in the moving bed model are shown in Figure \ref{fig:mb-steady-convergence}
and results of a sweep varying gas temperature and supply pressure in the pipeline
dynamic optimization problem are shown in Figure \ref{fig:pipeline_convergence}.
A summary showing the percent of problems solved by each method, for each model
and in total, is shown in Table \ref{tab:convergence}. \rev{The average solve time ($t^{avg}_{solve}$) in Table \ref{tab:convergence} is computed over the successfully converged instances of the parameter sweep.}

\begin{figure}[h]
  \centering
  \resizebox{\textwidth}{!}{
  \includegraphics[width=0.38\textwidth]{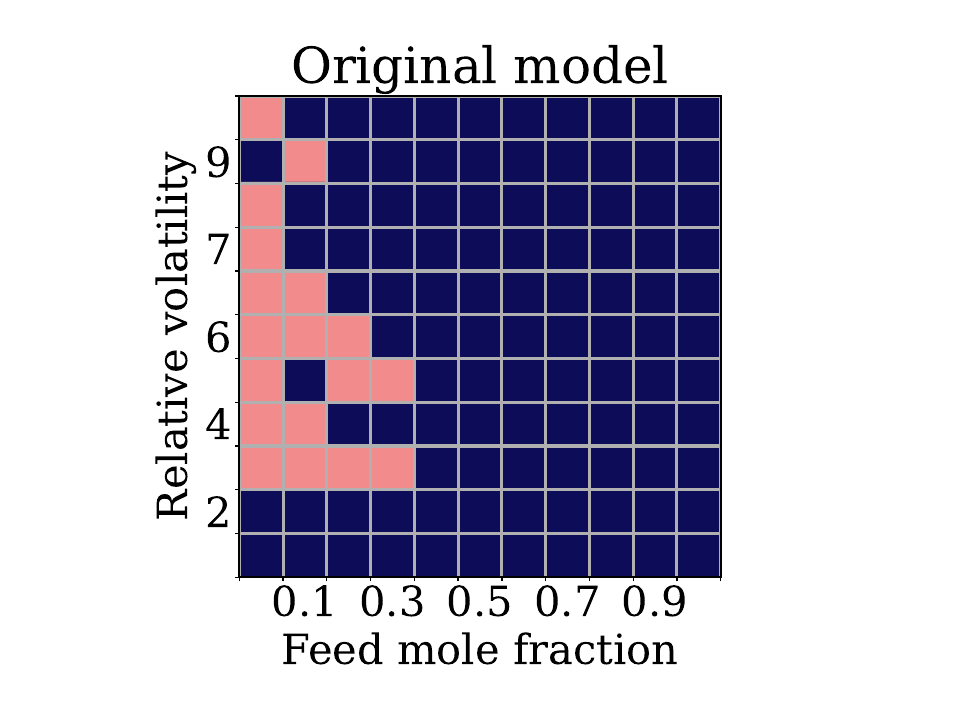}
  \hspace{-0.12\textwidth}\includegraphics[width=0.456\textwidth]{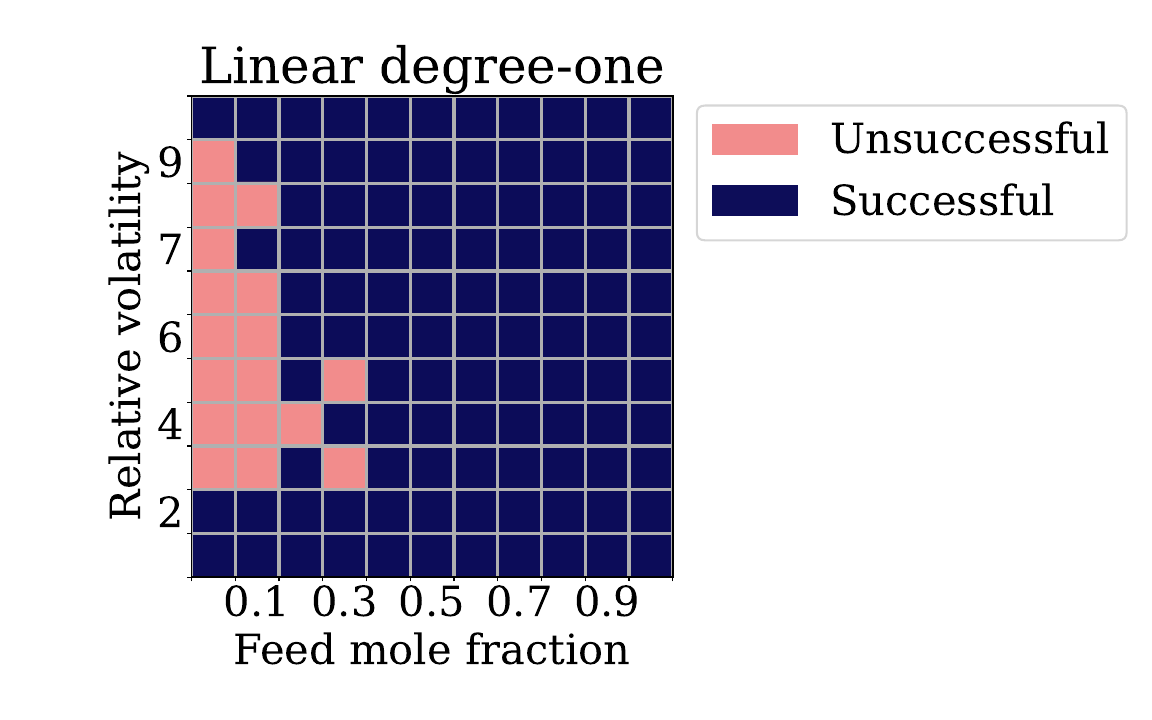}
  \hspace{-0.07\textwidth}\includegraphics[width=0.38\textwidth]{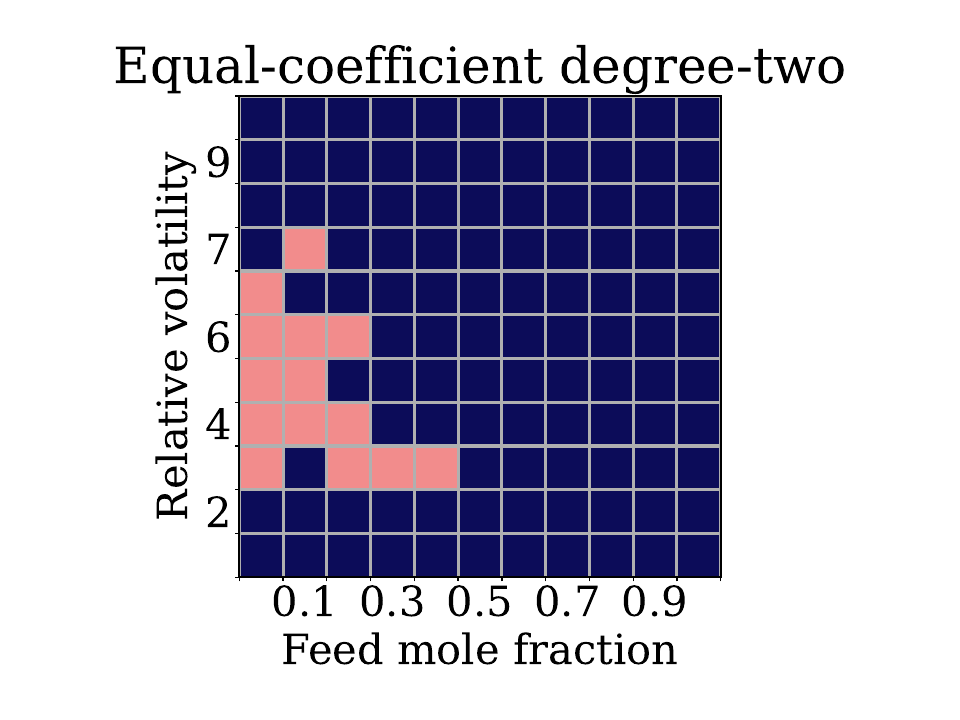}
                        }
  \resizebox{1.05\textwidth}{!}{
    \hspace{-0.025\textwidth}\includegraphics[width=0.38\textwidth]{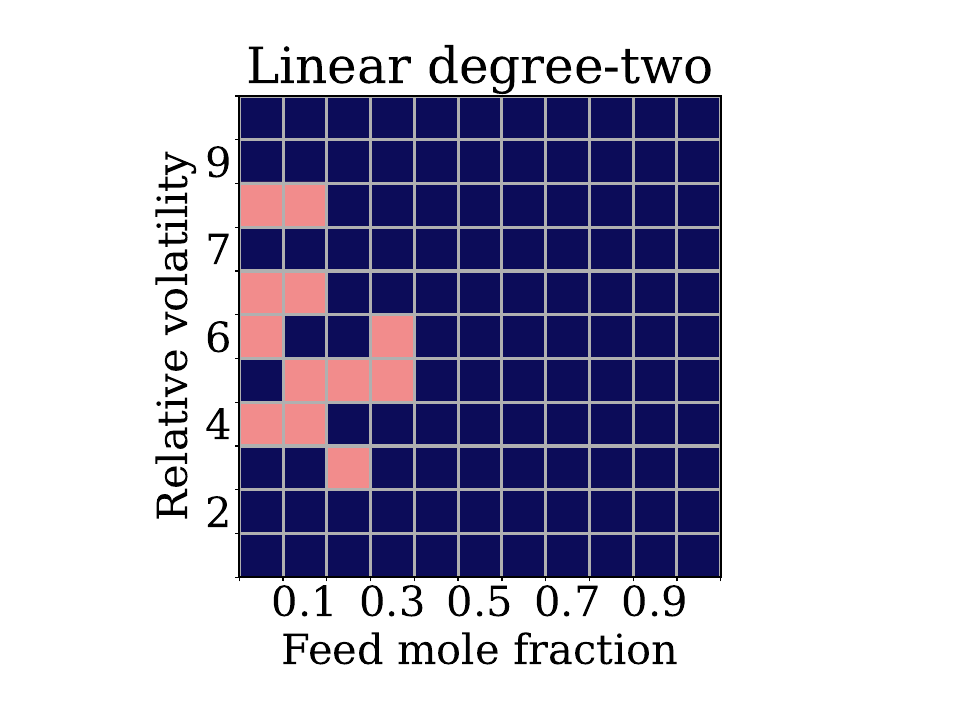}
  \hspace{-0.15\textwidth}\includegraphics[width=0.38\textwidth]{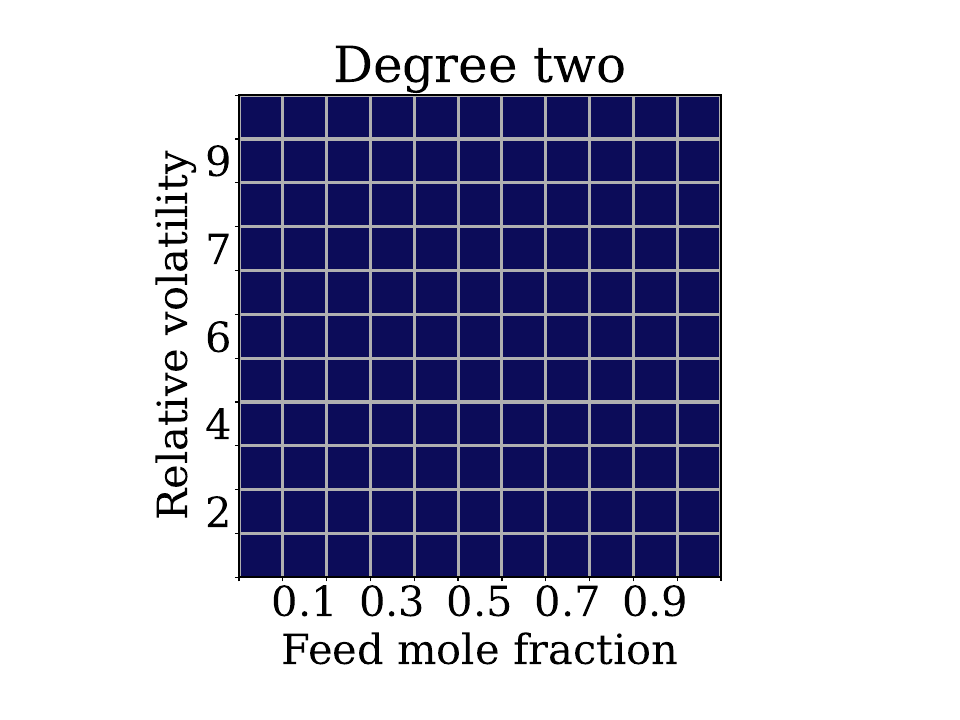}
  \hspace{-0.15\textwidth}\includegraphics[width=0.38\textwidth]{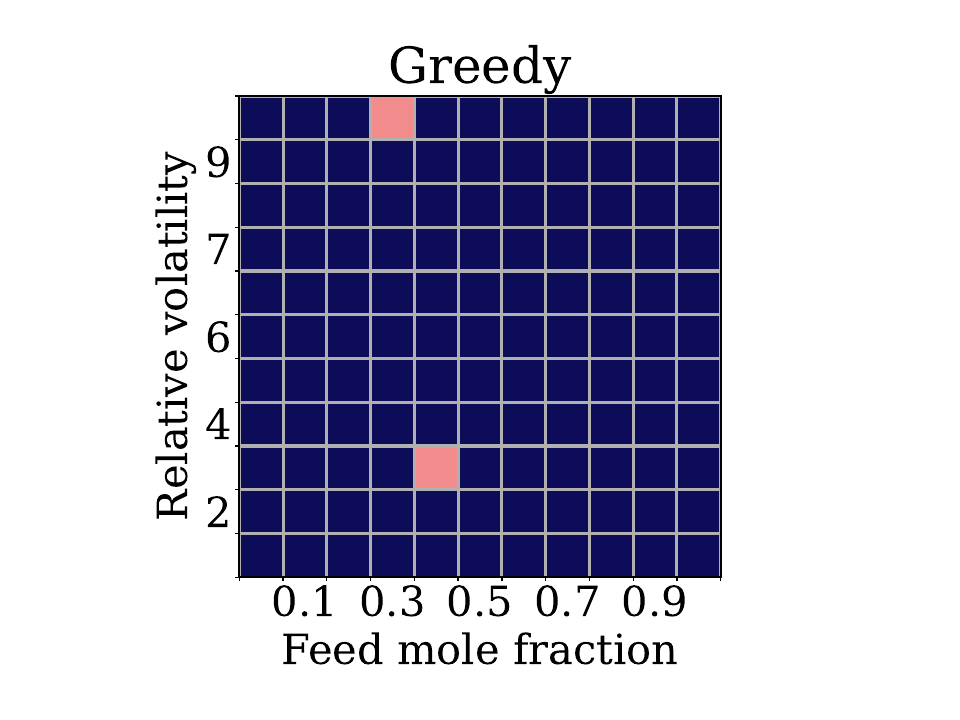}
  \hspace{-0.15\textwidth}\includegraphics[width=0.38\textwidth]{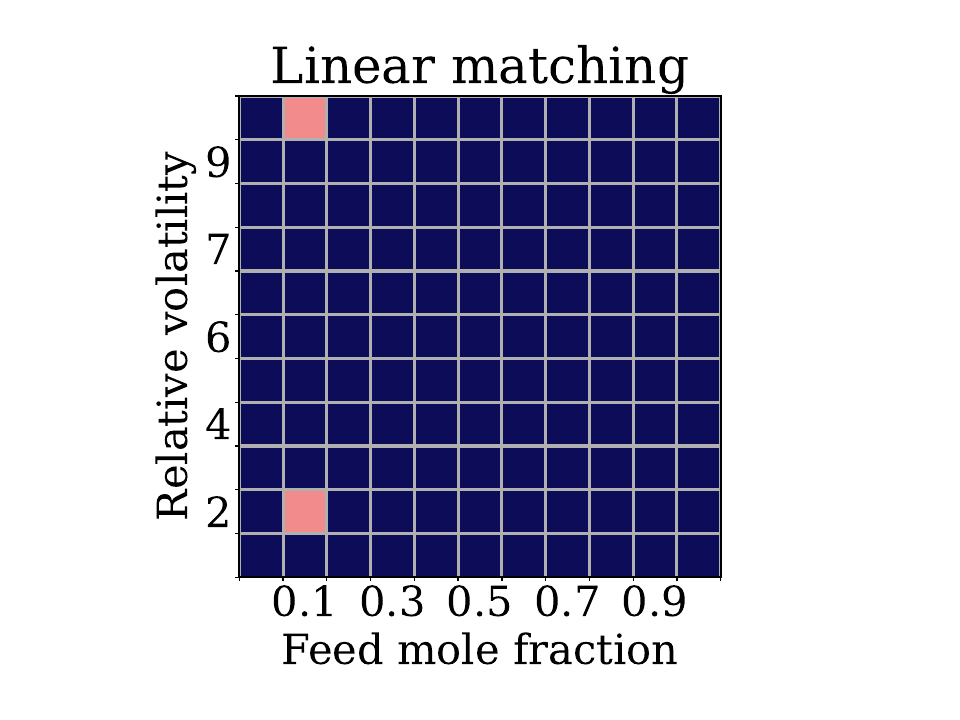}
  }
  \caption{Convergence of the distillation column optimal control problem after
  performing different aggregation strategies}
  \label{fig:distill-convergence}
\end{figure}

\begin{figure}[h]
  \centering
  \resizebox{\textwidth}{!}{
                 \includegraphics[width=5cm]{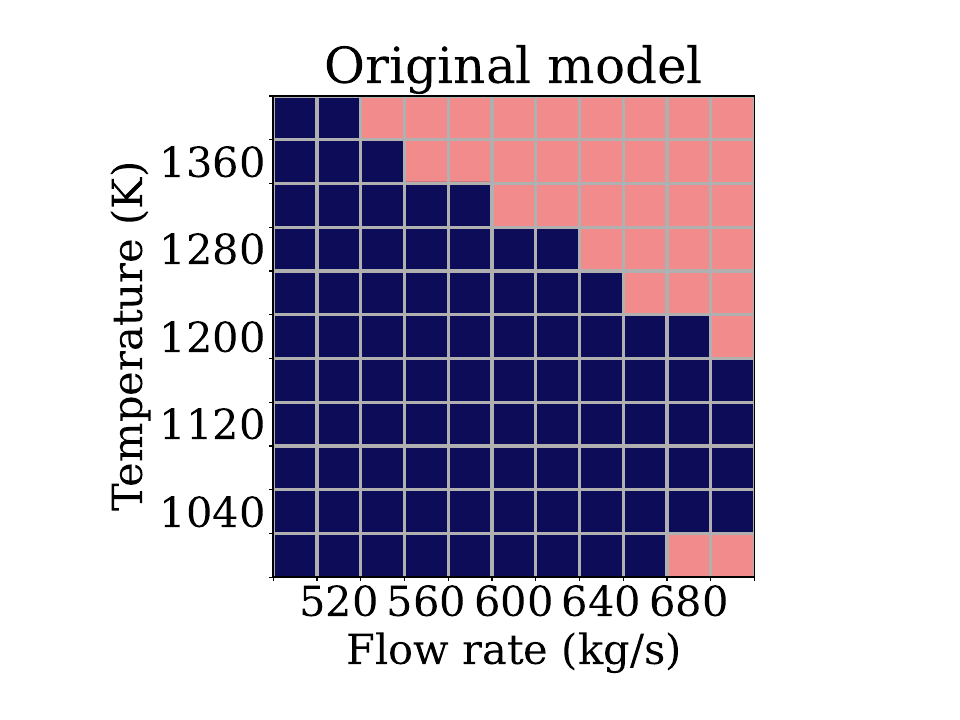}
  \hspace{-0.9cm}\includegraphics[width=6cm]{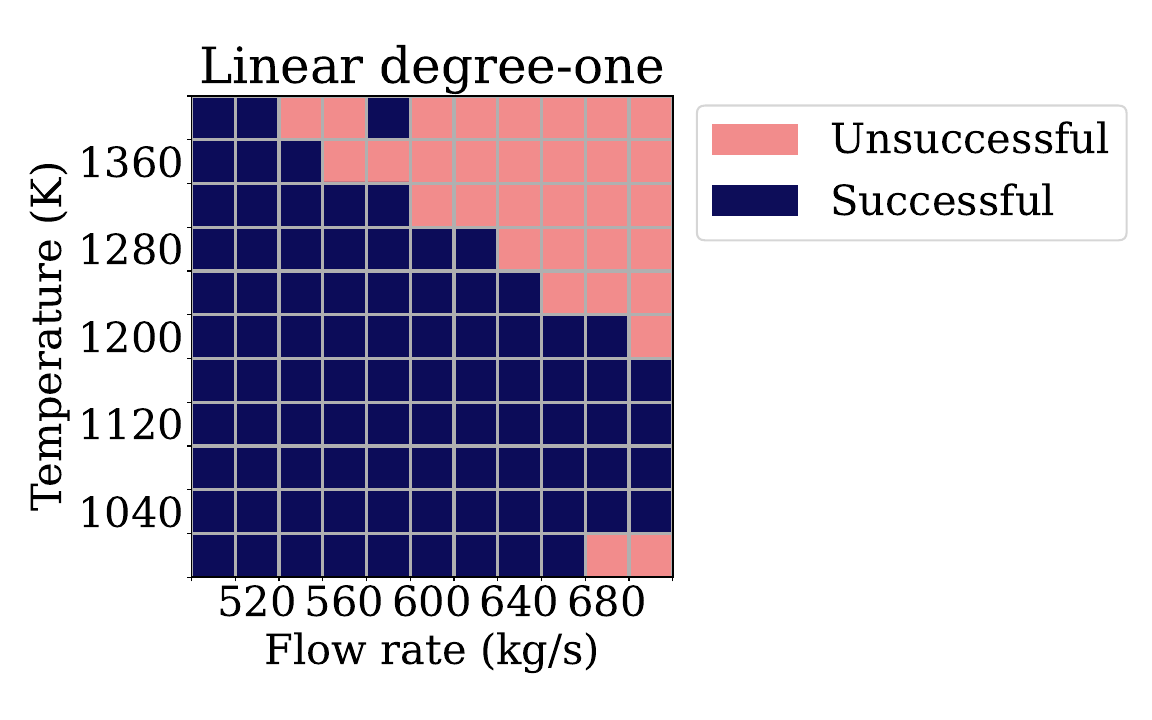}
  \hspace{-0.7cm}\includegraphics[width=5cm]{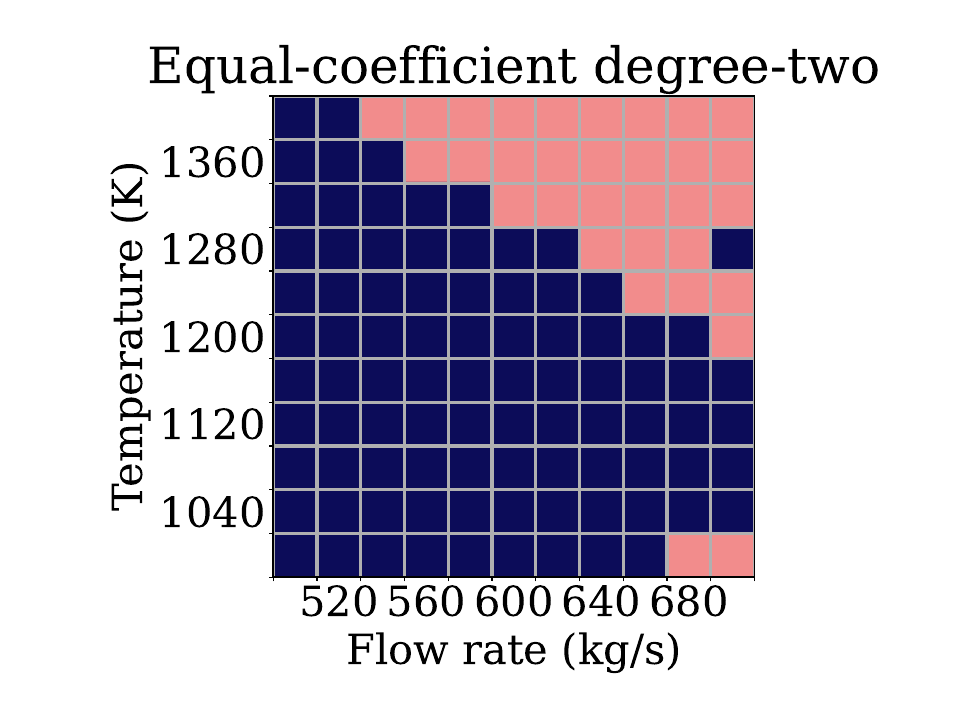}
               }
               \resizebox{1.08\textwidth}{!}{
  \hspace{-0.05\textwidth}\includegraphics[width=5cm]{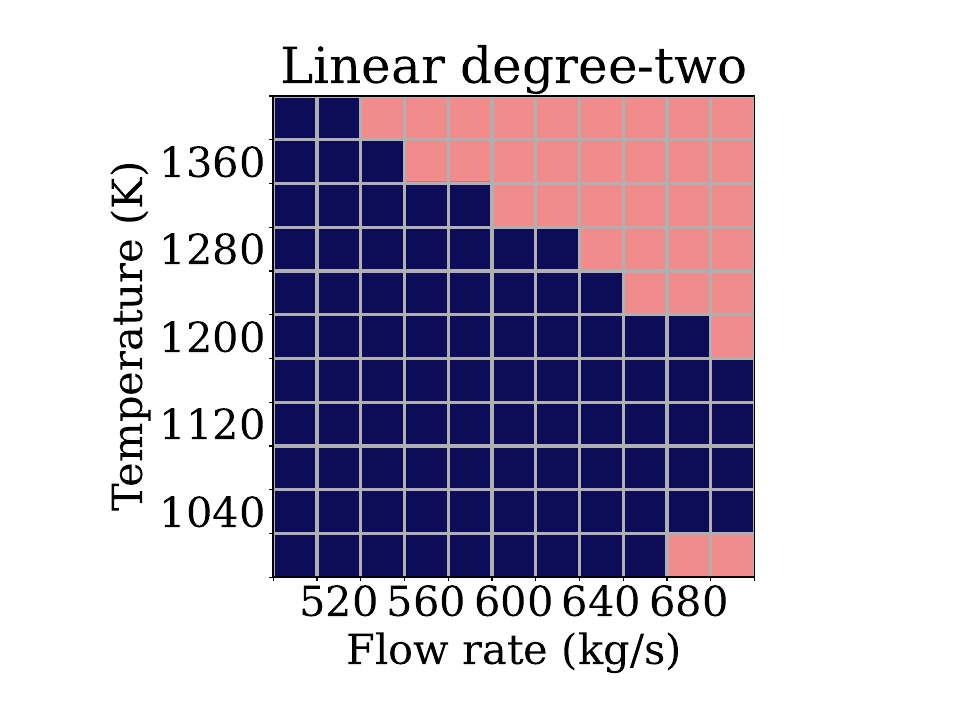}
  \hspace{-1.5cm}\includegraphics[width=5cm]{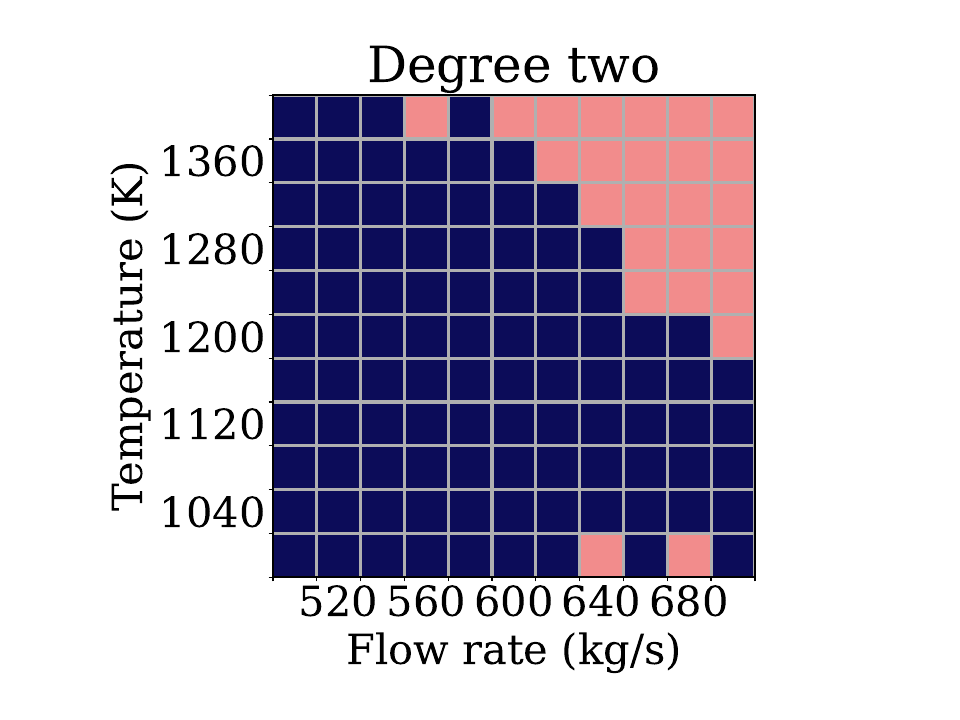}
  \hspace{-1.5cm}\includegraphics[width=5cm]{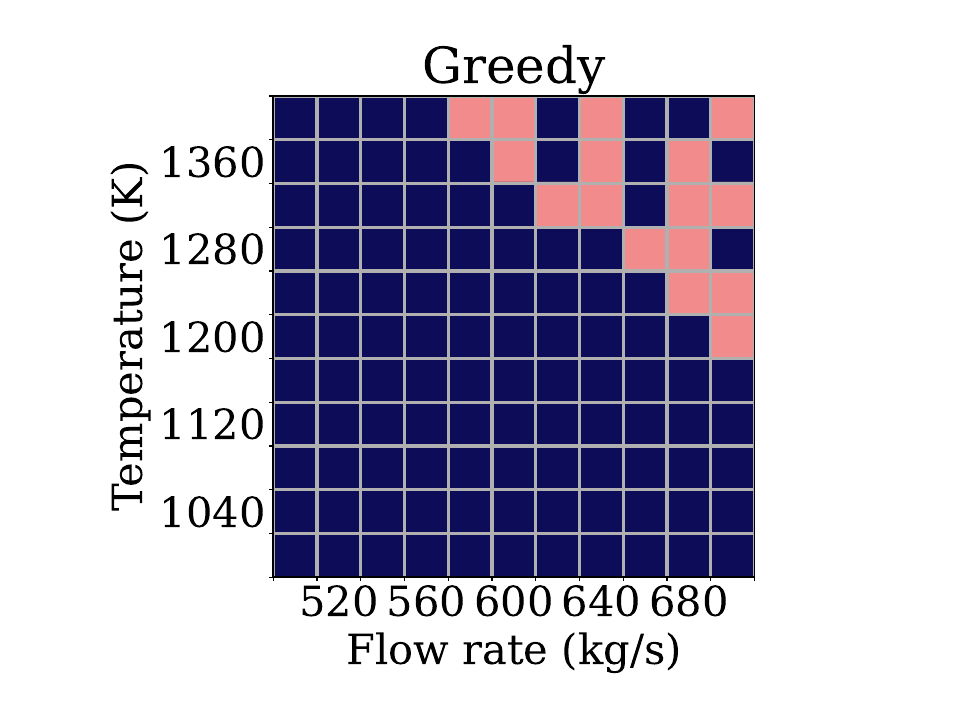}
  \hspace{-1.5cm}\includegraphics[width=5cm]{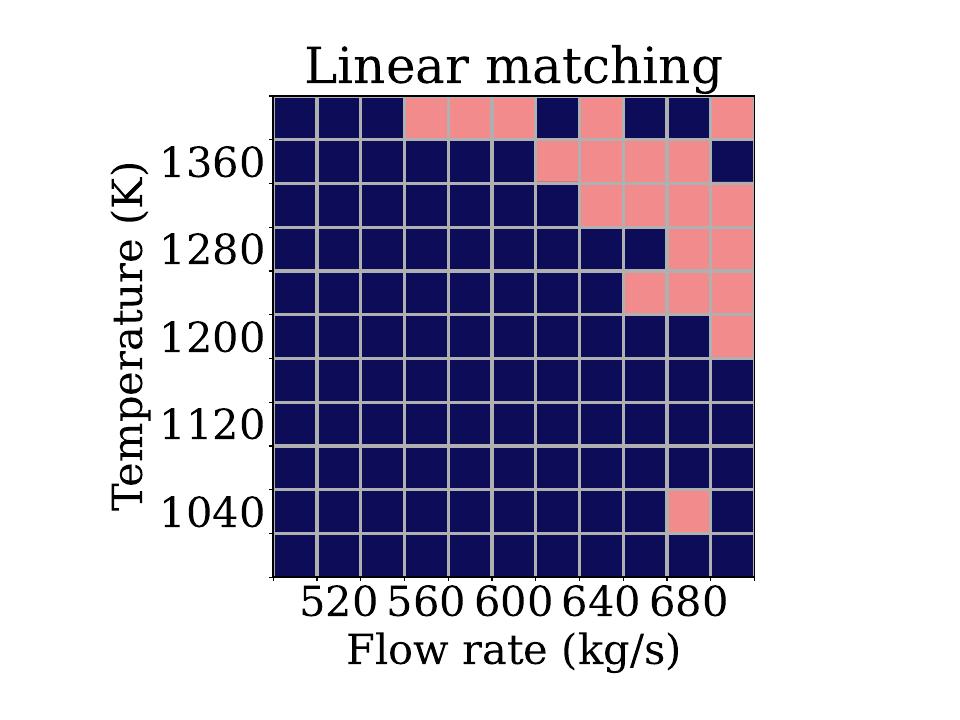}
               }
  \caption{Convergence of the moving bed reactor optimization problem for
  different aggregation strategies}
  \label{fig:mb-steady-convergence}
\end{figure}

\begin{figure}[h]
  \centering
  \resizebox{\textwidth}{!}{
                 \includegraphics[width=5cm]{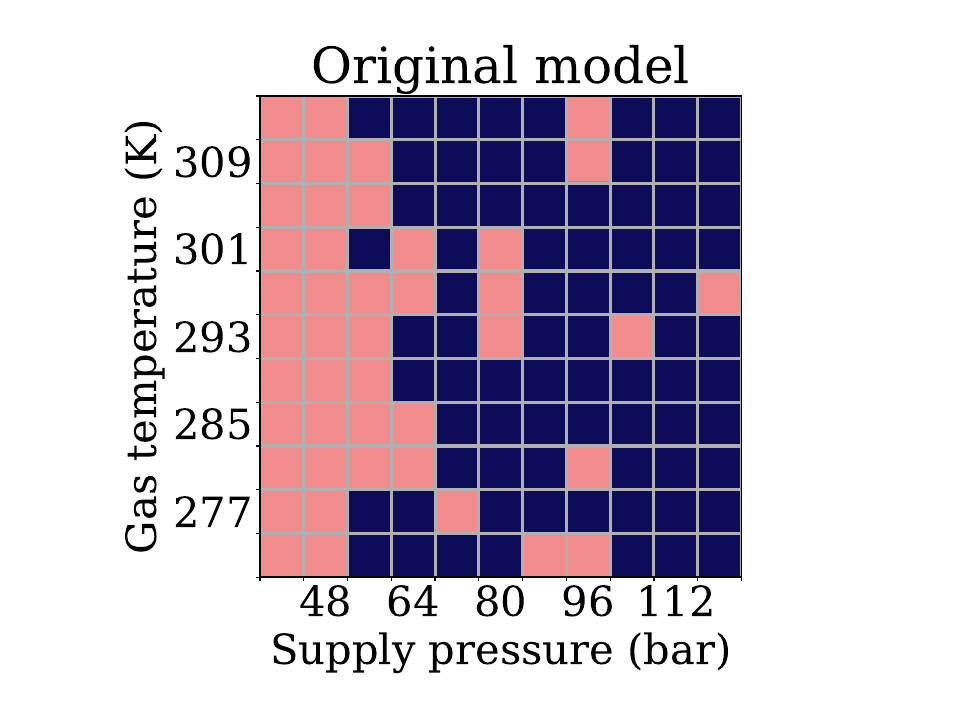}
  \hspace{-0.9cm}\includegraphics[width=6cm]{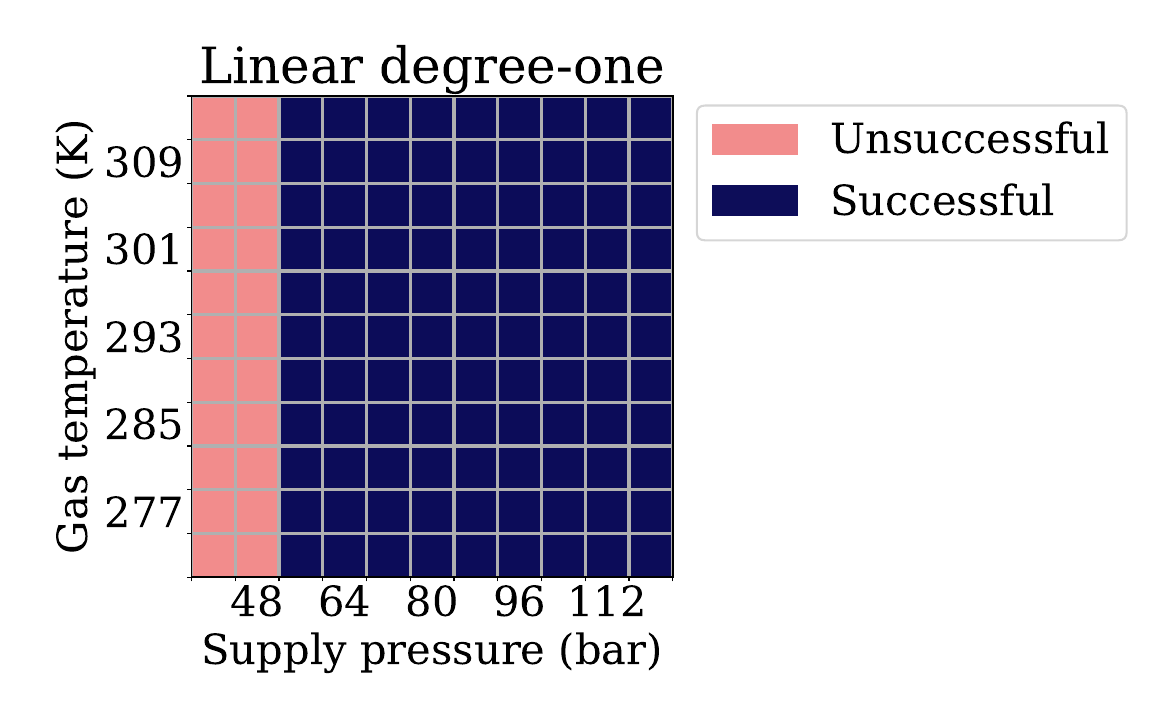}
  \hspace{-0.7cm}\includegraphics[width=5cm]{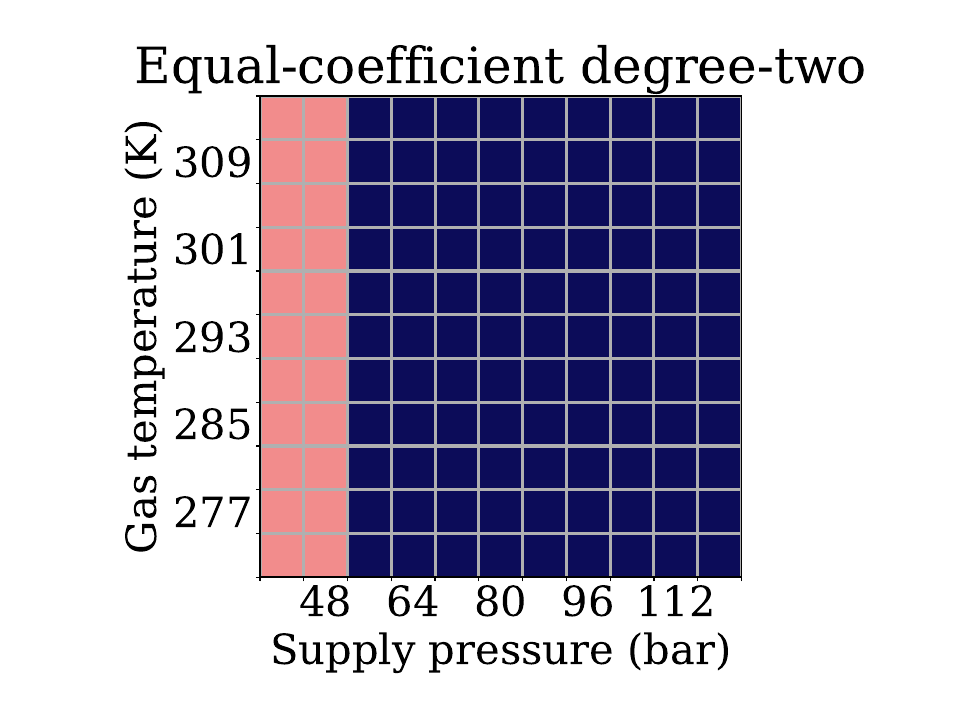}
               }
               \resizebox{1.08\textwidth}{!}{
  \hspace{-0.05\textwidth}\includegraphics[width=5cm]{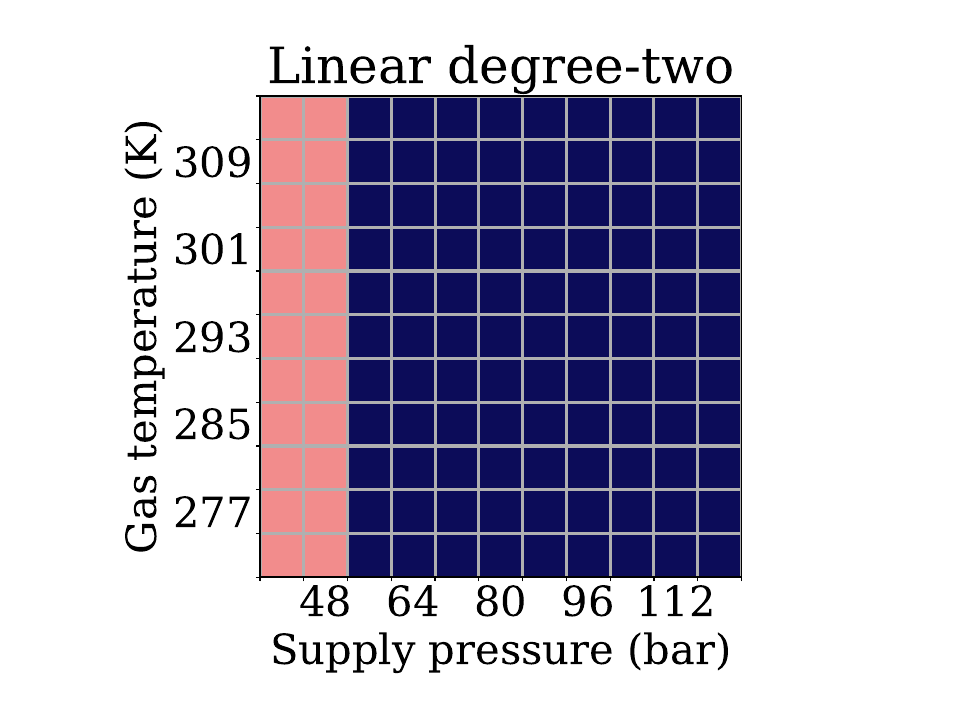}
  \hspace{-1.5cm}\includegraphics[width=5cm]{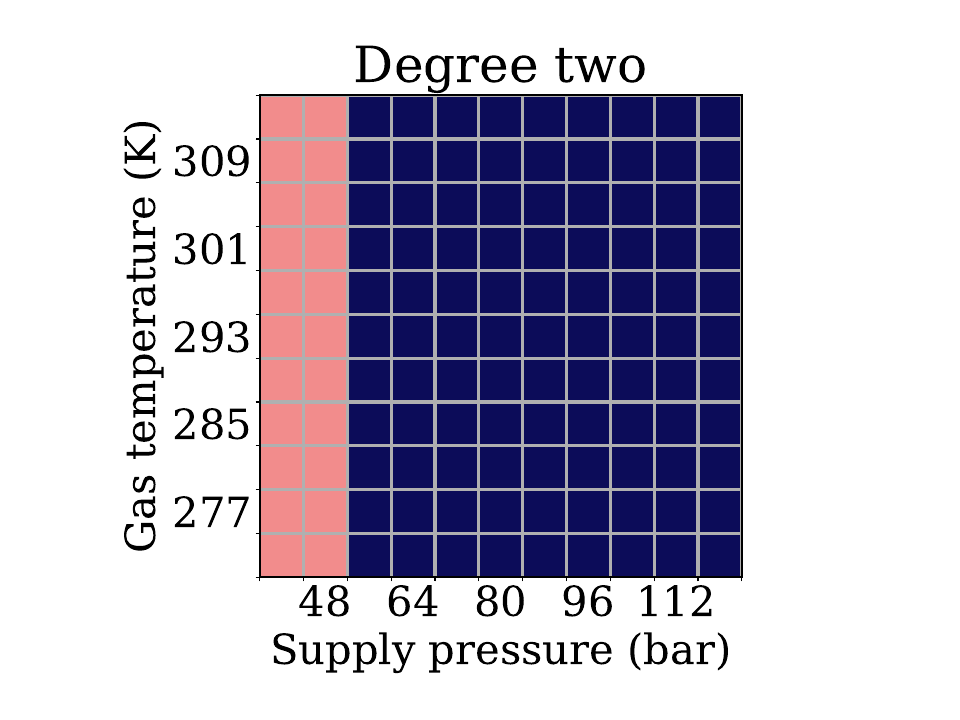}
  \hspace{-1.5cm}\includegraphics[width=5cm]{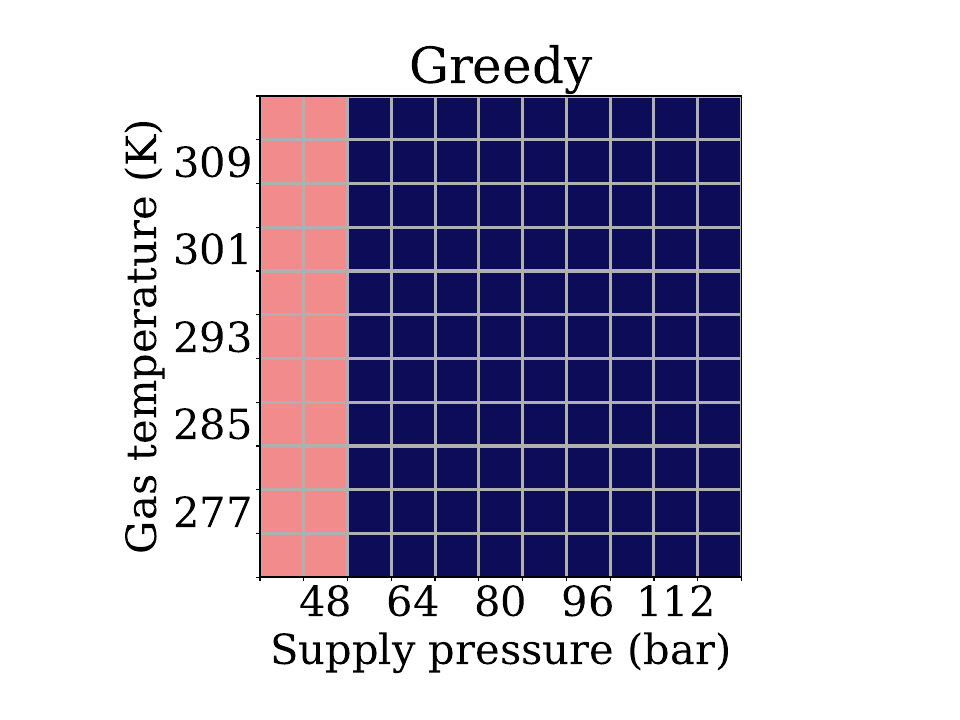}
  \hspace{-1.5cm}\includegraphics[width=5cm]{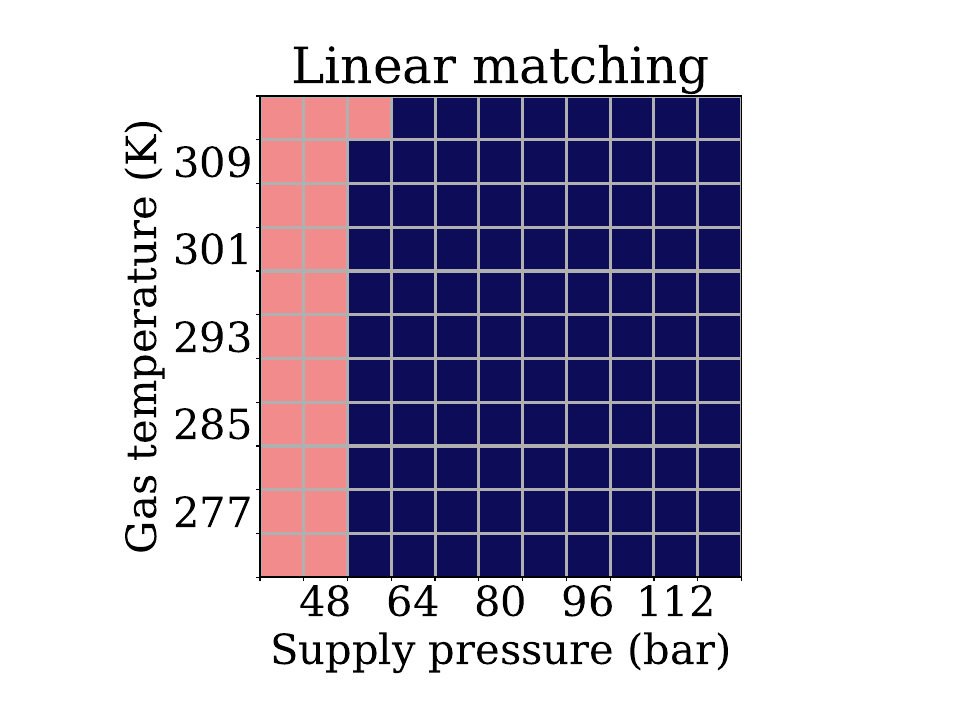}
               }
  \caption{Convergence of the gas pipeline optimization problem for
  different aggregation strategies}
  \label{fig:pipeline_convergence}
\end{figure}

\begin{table}
  \centering
  \caption{Percent of instances converged for methods applied to each model}
  \begin{tabular}{ccccccccc}
    \toprule
    \multirow{2}{*}{Method} & \multicolumn{2}{c}{Distillation} & \multicolumn{2}{c}{Moving bed} & \multicolumn{2}{c}{Pipeline} & Avg \% convergence \\ 
     & \% conv & $t_{solve}^{avg}$
     & \% conv & $t_{solve}^{avg}$
     & \% conv & $t_{solve}^{avg}$ \\ 
\midrule
--        &  85 & 73.2 &  73 & 0.7 & 64 & 49.0 & 74\\
LD1       &  86 & 74.1 &  74 & 0.9 &  82 & 10.4 &  80\\
ECD2      &  88 & 45.9 &  74 & 0.7 &  82 & 6.9 & 81\\
LD2       &  90 & 56.1 &  73 & 0.9 &  82 & 6.5 & 82\\
D2        & 100 & 91.7 &  79 & 0.7 &  82 & 6.1 &  87\\
GR        &  98 & 78.5 & 87 & 0.8 & 82 & 6.3 &  89\\
LM        &  98 & 88.1 & 83 & 0.7 & 81 & 15.9 & 88\\
\bottomrule
  \end{tabular}
  \label{tab:convergence}
\end{table}

Our first observation is that variable aggregation generally improves convergence.
No aggregation method leads to fewer instances converged, for any test problem, than
the original model (represented by ``--'' in the ``Model'' column of Table
\ref{tab:convergence}). For each model, the discrepancy between instances converged
with the original model and with the best aggregation method is significant:
With the distillation model and the degree-2 strategy, 18 additional instances (15\% more) converge,
with the moving bed model and the greedy strategy, 17 additional instances (14\% more) converge,
and with the pipeline model and several different aggregation strategies, 22 additional instances
(18\% more) converge.
Overall, the best strategies are degree-2 (Algorithm \ref{alg:nonlinear-degree2}),
greedy (Algorithm \ref{alg:greedy}), and linear-matching (Algorithm \ref{alg:matching}),
which converge between 13\% and 15\% more instances of these three test problems than
the original model.

Here, methods that aggregate more variables generally converge more often, but the trend
is not monotonic as the greedy strategy outperforms the linear-matching strategy despite
aggregating significantly fewer variables for most models.
Despite some methods performing significantly better than others on average, the
improvement is not uniform for each problem instance. That is, there are some
instances that fail with a ``better'' method despite succeeding with a ``worse'' method.
We note that the reason for success or failure of an interior point method with variables
aggregated or disaggregated is not easy to determine
and believe that investigating the contribution of variable aggregation to enlarging
or shrinking the basin of attraction of the interior point solution is an open and
interesting area for future research.
We believe that some of the improvement is due to primal iterates that stay closer
to the feasible set of the original model and are more likely to have well-conditioned
constraint Jacobians and KKT matrices than points along the infeasible path taken
by an interior point method when solving the original model.
We note that this explanation is similar to the motivation for IPOPT's feasibility
restoration phase (see Section 3.1 of \cite{ipopt}).
\begin{figure}[h]
  \centering
  \resizebox{1.03\textwidth}{!}{
    \hspace{-0.08\textwidth}\includegraphics[width=5cm]{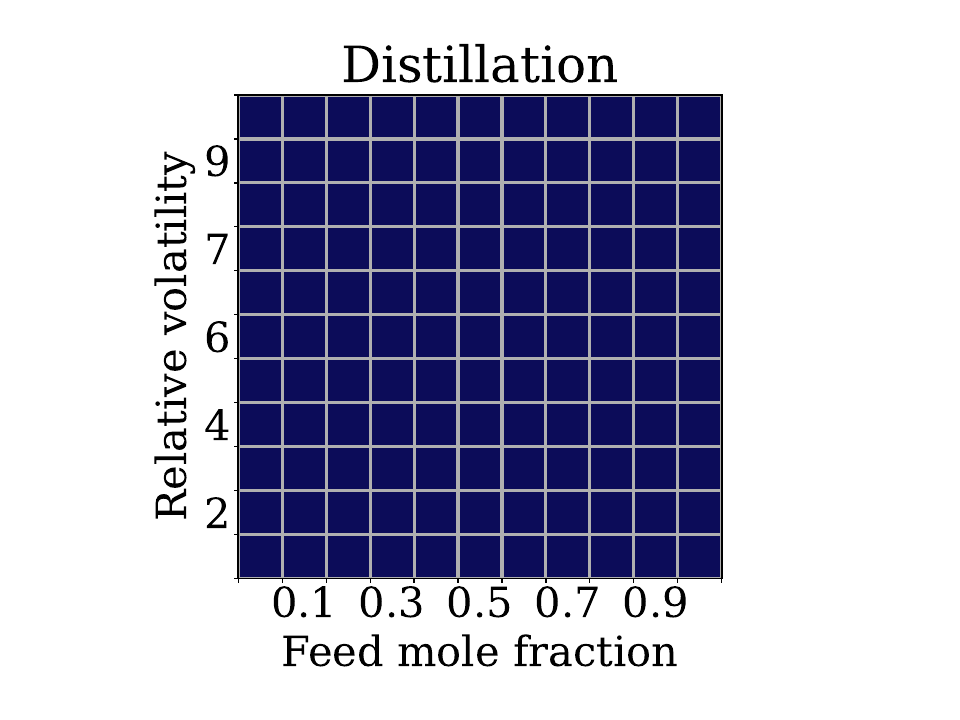}
  \hspace{-1.5cm}\includegraphics[width=5cm]{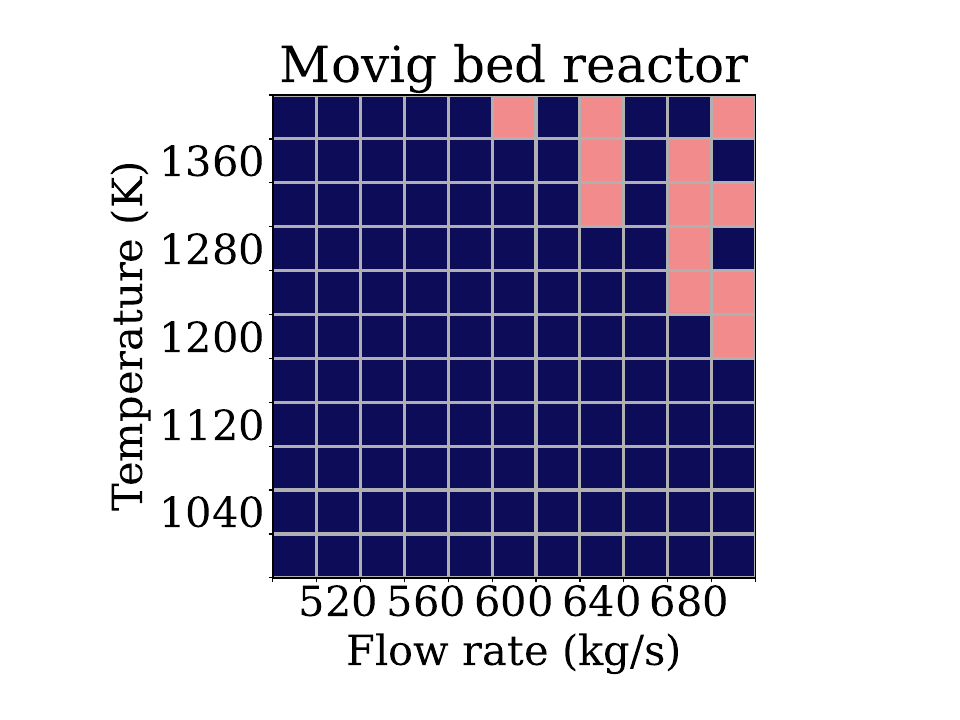}
  \hspace{-1.0cm}\includegraphics[width=6cm]{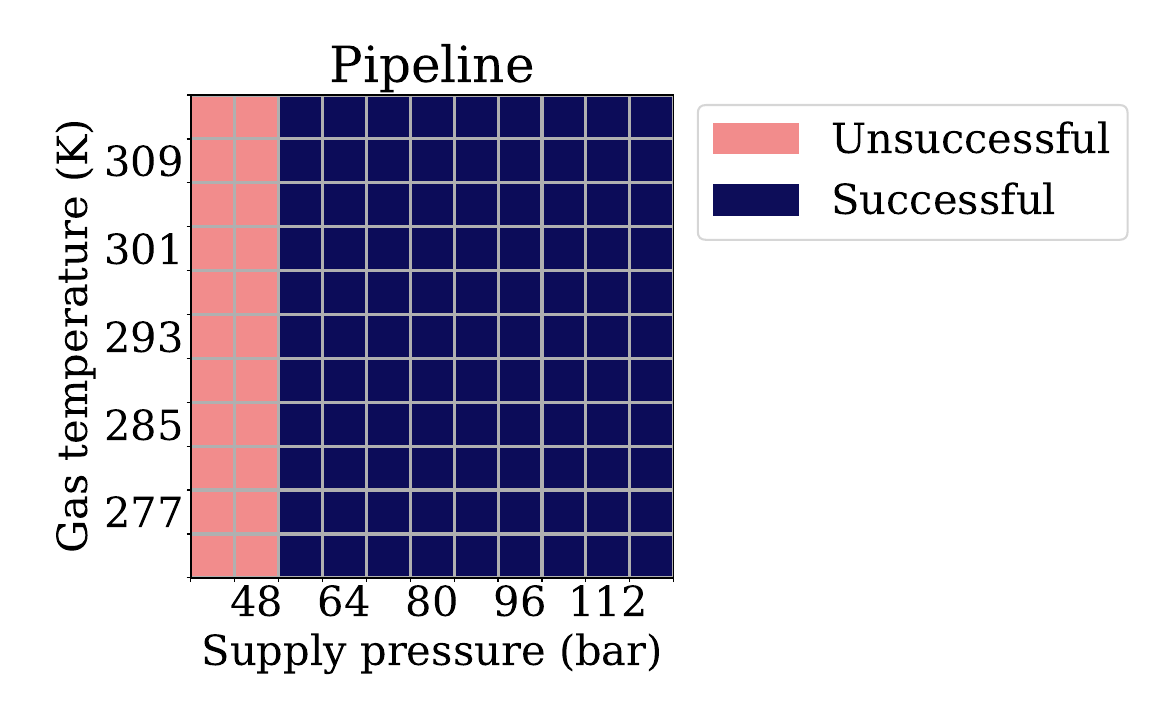}
}
  \caption{Virtual best for all aggregation strategies for distillation, gas pipeline and moving bed reactor model}
  \label{fig:virtual-best}
\end{figure}

In algorithm design, the ``virtual best'' performance is defined as the performance
of a theoretically perfect algorithm for variable aggregation. In this case, the virtual
best algorithm picks the aggregation strategy from among the ones mentioned in Section
\ref{sec:methods} to maximize the convergence across the parameter sweep for each
model. Figure \ref{fig:virtual-best} represents the virtual best for distillation,
gas pipeline and the moving bed reactor models. For distillation, the degree-2 aggregation
strategy is the virtual best since it achieves the same convergence. For the pipeline
problem, all aggregation strategies except linear matching are virtual best. However,
for the moving bed reactor, the virtual best achieves $90 \%$ convergence while greedy---the
best performing aggregation strategy---only achieves $87\%$ convergence.
This indicates that there is further scope for improvement in the variable aggregation
strategies to improve convergence reliability.
\section{Conclusion}
\rev{A novel linear-matching variable aggregation strategy is introduced
  and compared with existing strategies across four
nonlinear optimization problems.}
The strategies strategies are compared across four test problems
in terms of structure, solve time and convergence reliability. The
aggregation strategies are grouped into two main categories:
\begin{itemize}
    \item Structure-preserving strategies that aggregate variables without increasing the density of the remaining constraints
    \item Approximate maximum aggregation strategies that maximize the variables aggregated but may lead to significant increase in the number of non-zeros in the remaining constraints
\end{itemize}
Different aggregation strategies aggregate significantly different number of
variables across all test problems. The approximate maximum aggregation
strategies, specifically the linear matching-based heuristic, eliminate a
significantly higher percentage $(70-90\%)$ of variables compared to the
structure-preserving strategies which eliminate $<60\%$ variables.
However, it
is important to note that the number of non-zeros per constraint stays
approximately the same when structure-preserving aggregation strategies are
used, while the approximate maximum aggregation strategies lead to an increase
in the total number of non-zeros per constraint and also the increase the
nonlinear non-zeros making the problem more nonlinear. 
\rev{We note that the success of the linear-matching strategy at eliminating
  large numbers of variables can also be leveraged to identify large nonsingular
  submatrices for Schur complement decompositions in linear systems,
  which is a potential avenue for future work.}

Variable aggregation can lead to a decrease in solve time as it decreases the factorization time of the KKT matrix. However, there are no guarantees that variable aggregation will decrease solve time as demonstrated in the OPF case where solve time increases after aggregating more variables due to an increase in inequality constraints introduced after elimination of bounded variables. Our work also suggests that Hessian evaluation can become a bottleneck if more variables are aggregated without preserving the structure of the problem, and hence, structure-preserving aggregation strategies are valuable.  

The convergence reliability experiments indicate that variable aggregation leads to more
reliable problem formulations. For the distillation example, degree-2 aggregation
strategy leads to $100 \%$ convergence over the parameter sweep.
One reason for improved convergence may be that the aggregated problems follow a ``more
feasible'' path with respect to the constraints of the original problem,
leading to a better-conditioned KKT matrix.
It is also observed that degree-2, greedy, and linear-matching strategies lead
to more reliable convergence than the linearity-preserving, structure-preserving
strategies.
This result reveals an important trade-off: While aggregating many variables may
introduce a new bottleneck due to expensive Hessian evaluation, it may lead to
significantly more reliable convergence for some problems. For this reason,
we recommend the degree-2 aggregation method, which strikes a balance between
preserving structure and aggregating many variables, as a general pre-solve reduction.

While we observe that aggregation generally leads to more reliable convergence, we note
that (a) the improvement is not uniform as more variables are eliminated and (b) there
may be other problems for which aggregation causes worse convergence reliability.
For instance, Albersmeyer and Diehl \cite{albersmeyer2010lifted} suggest ``lifting''
a problem into a higher-dimensional space to improve iteration counts.
Explaining the effects of problem reformulations on algorithms' convergence properties
is a significant challenge for nonlinear optimization, and this work will serve as a basis
for further comparison and explanation of convergence behavior.

Based on our results, we believe that approximate-maximum and incidence-preserving
aggregation methods should be implemented as pre-solve options in nonlinear modeling
environments and solvers \rev{with an incidence-preserving aggregation method as the default option}.
Additionally, we believe that the following efforts should
be undertaken to explain and confirm the trends we observe:
\begin{enumerate}
  \item Theoretical analysis of interactions among solvers' \rev{global convergence} methods
    \rev{(e.g., line search and trust region methods)} and
    different aggregation strategies.
  \item Development of a test set of challenging, parameterized nonlinear optimization
    problems for which convergence of state-of-the-art solvers is not 100\% reliable.
\end{enumerate}
With these efforts, the methods we present may be further analyzed, theoretically and
experimentally, and more informed development of nonlinear optimization software
may proceed.

\bibliography{ref}

\section*{Statements and Declarations}
\rev{We gratefully acknowledge funding from the Los Alamos National Laboratory
Laboratory-Directed Research \& Development program through the Center for Nonlinear Studies
(CNLS).
LA-UR-25-21592.

\noindent We have no competing interests to disclose.}

\appendix
\section{Block triangularization}
\label{sec:block-triang}

The block triangularization algorithm takes an undirected bipartite graph $G=(A,B,E)$
and a perfect matching thereof $\mathcal{M}$ as inputs and returns an ordered partition
of $\mathcal{M}$, $\mathcal{B}$, that defines the irreducible block lower triangular
form.
Algorithms for permuting a sparse matrix or bipartite graph to irreducible block
triangular form are well described by Duff and Reid
\cite{duff1978implementation,duff1978algorithm} and Pothen and Fan \cite{pothen1990}.
These descriptions define the algorithm as returning partitions of node sets $A$ and
$B$, or of rows and columns of the sparse matrix.
However, to facilitate the presentation of the matching-based aggregation algorithm,
Algorithm \ref{alg:matching}, we define
the block triangularization algorithm to return a partition of edges. For completeness,
we provide a full description of the block triangularization algorithm in this appendix.
The algorithm is defined using operations on a directed graph.

A {\it directed graph} $G=(V, E)$ is a graph in which edges are {\it ordered} pairs
of nodes. If $E$ contains an edge $(u, v)$, $v$ is said to be {\it out-adjacent} to $u$,
while $u$ is {\it in-adjacent} to $v$.
In a directed graph, a {\it path} is a sequence of nodes $\{u_1, \dots, u_n\}$ where,
for every adjacent pair in the sequence $(u_i, u_{i+1})$, $u_{i+1}$ is out-adjacent
to $u_i$. A {\it strongly connected component} $C$ is a subset of nodes in a graph
where, for every pair of nodes $u,v\in C$, there exists a path from $u$ to $v$.
The set of all strongly connected components $\mathcal{C}$ partitions the nodes in a
directed graph.
Strongly connected components of a directed graph may be computed in
$\mathcal{O}(n_v+n_e)$ time, where $n_v$ is the number of vertices and $n_e$ is the
number of edges, by Tarjan's algorithm \cite{tarjan1972}.
A {\it cycle} is a path that starts and ends with the same node, and a {\it directed acyclic
graph} (DAG) is a directed graph that has no cycles.
The strongly connected components of a directed graph form a DAG that can be constructed
by the {\tt compress} subroutine described in Algorithm \ref{alg:compress}.
A {\it topological order} is a (nonunique) permutation of the nodes in a DAG such that,
if $(u, v)$ is an edge in the DAG, $u$ comes before $v$ in the topological order.
A topological order may be computed in $\mathcal{O}(n_e+n_v)$ time \cite{manber1989}.

The block triangularization algorithm is given by Algorithm
\ref{alg:block-triangularize}. It consists of five major steps:
\begin{enumerate}
  \item Project the bipartite graph into a directed graph defined over the
    bipartite set $A$, using the matching $\mathcal{M}$ to determine edge orientation.
  \item Partition the nodes $A$ into strongly connected components of the new
    directed graph.
  \item Compress the strongly connected components into a DAG. Each node of the
    DAG is a subset of nodes in the directed graph (a subset of $A$).
  \item Compute a topological order of the DAG.
  \item For each subset of $A$ in the topological order, convert each node $a$
    to an edge $(a,b)$ by finding the matched vertex in $\mathcal{M}$.
\end{enumerate}
Descriptions of the projection and compression subroutines are given in
Algorithms \ref{alg:project} and \ref{alg:compress}, and Table
\ref{tab:bt-subroutines} summarizes all subroutines used by the block
triangularization algorithm.
By construction, the returned set of sets of edges, $\mathcal{T}$
in Algorithm \ref{alg:block-triangularize}, is a partition of the perfect
matching $\mathcal{M}$.

\begin{table}
  \centering
  \caption{Subroutines used by {\tt block\_triangularize} algorithm}
  \resizebox{\textwidth}{!}{
    \begin{tabular}{cp{0.2\linewidth}p{0.2\linewidth}cp{0.3\linewidth}}
      \toprule
    Subroutine & Inputs & Outputs & Time complexity & Description \\
    \midrule
    {\tt adjacent\_to} & Graph $G$, node $u$ & Set of nodes & $\mathcal{O}(\delta(u))$ & Set of nodes in $G$ adjacent to $u$ \\
    {\tt matched\_with} & Matching $\mathcal{M}$, node $u$ & Node & $\mathcal{O}(1)$ & Node matched with $u$ in $\mathcal{M}$\\
    {\tt subset\_containing} & Set of disjoint sets, node $u$ & Set & $\mathcal{O}(1)$ & Return set containing $u$ \\
    \midrule
    {\tt project} & Bipartite graph, matching & Directed graph & $\mathcal{O}(n_e)$ & Directed graph defined on one bipartite set\\
    {\tt strongly\_connected\_comps} & Directed graph & Set of sets of nodes & $\mathcal{O}(n_v + n_e)$ & Return the strongly connected components of the graph \\
    {\tt compress} & Graph, set of sets of nodes & Graph & $\mathcal{O}(n_v + n_e)$ & Each specified subset of nodes in the original graph is a single node in the returned graph \\
    {\tt topological\_sort} & Directed acyclic graph & Ordered set of nodes & $\mathcal{O}(n_v+n_e)$ & Permute the nodes into topological order \\
    \bottomrule
  \end{tabular}
  }
  \label{tab:bt-subroutines}
\end{table}

\begin{algorithm}
  \caption{: {\tt block\_triangularize}}
  \begin{algorithmic}[1]
    \State {\bf Inputs:} Bipartite graph $G=(A,B,E)$, perfect matching $\mathcal{M}$
    \State $G_d = \text{\tt project}(G, \mathcal{M})$ \Comment{The nodes of $G_d$ are $A$}
    \State $\mathcal{C} = \text{\tt strongly\_connected\_comps}(G_d)$ \Comment{$\mathcal{C}$ partitions $A$}
    \State $D = \text{\tt compress}(G_d, \mathcal{C})$ \Comment{The nodes of $D$ are $\mathcal{C}$}
    \State $\mathcal{S} = \text{\tt topological\_sort}(D)$
    \State $\mathcal{T} = \{\}$
    \For{$C\text{~\bf in }\mathcal{S}$} \Comment{$C$ is a subset of $A$}
    \State $T = \{\}$ \Comment{$T$ is a subset of  $\mathcal{M}$}
      \For{a$\text{~\bf in } C$}
        \State $b=\text{\tt matched\_with}(\mathcal{M}, a)$
        \State $T \gets T \cup (a, b)$
      \EndFor
      \State $\mathcal{T} \gets \mathcal{T} \cup T$
    \EndFor
    \State {\bf Return:} $\mathcal{T}$
  \end{algorithmic}
  \label{alg:block-triangularize}
\end{algorithm}

\begin{algorithm}
  \caption{: {\tt project}}
  \begin{algorithmic}[1]
    \State {\bf Inputs:} Bipartite graph $G=(A,B,E)$, matching $\mathcal{M}$
    \State $E_d=\{\}$
    \For{$a\text{~\bf in } A$}
      \State $b={\tt matched\_with}(\mathcal{M}, a)$
      \For{$\bar a\text{~\bf in }\text{\tt adjacent\_to}(b)$}
        \State $E_d \gets E_d \cup (\bar a, a)$
      \EndFor
    \EndFor
    \State {\bf Return:} $G_d=(A,E_d)$
  \end{algorithmic}
  \label{alg:project}
\end{algorithm}

\begin{algorithm}
  \caption{: {\tt compress}}
  \begin{algorithmic}[1]
    \State {\bf Inputs:} Graph $G=(V,E)$, partition $\mathcal{C}$
    \State $V_\mathcal{C} =\mathcal{C}$
    \State $E_\mathcal{C} = \{\}$
    \For{$C\text{~\bf in } \mathcal{C}$}
      \For{$u\text{~\bf in } C$}
        \For{$v\text{~\bf in } \text{\tt adjacent\_to}(u)$}
          \State $\bar C = \text{\tt subset\_containing}(\mathcal{C}, v)$
          \If{$C \neq \bar{C}$}
            \State $E_\mathcal{C}\gets E_\mathcal{C} \cup (C, \bar C)$
          \EndIf
        \EndFor
      \EndFor
    \EndFor
    \State {\bf Return:} $G_\mathcal{C}=(V_\mathcal{C},E_\mathcal{C})$
  \end{algorithmic}
  \label{alg:compress}
\end{algorithm}

\end{document}